\newcommand{\natdot}{{\raisebox{-2pt}{\scalebox{0.4}{\(\bullet\)}}}}
\setlist[enumerate,1]{label=\rm(\arabic*)}
\setlist[enumerate,2]{label=\rm(\alph*)}
\setlist[enumerate,3]{label=\rm(\roman*)}
\newcommand{\iso}{\cong}
\newcommand{\ie}{\textit{i.e.}}
\newcommand{\resp}{\textit{resp.}}
\newcommand{\ignore}[1]{\relax}
\newcommand{\bN}{\mathbb{N}}
\newcommand{\bZ}{\mathbb{Z}}
\newcommand{\bR}{\mathbb{R}}
\newcommand{\nbd}{\nobreakdash}
\newcommand{\id}{\ensuremath{\mathrm{id}}}
\newcommand{\tensor}{\otimes}
\newcommand{\shift}[2]{{}^{(#1)}#2}
\newcommand{\syst}[2]{\textbf{Syst}_{#1}\text{{\rm -}}#2}
\newcommand{\Sy}[1]{#1 \text{{\rm -}} \text{\rm\bf syst}}
\numberwithin{equation}{section}
\newtheorem*{theorem*}{Theorem}
\newtheorem{theorem}[equation]{Theorem}
\newtheorem{corollary}[equation]{Corollary}
\newtheorem{proposition}[equation]{Proposition}
\newtheorem{lemma}[equation]{Lemma}
\theoremstyle{definition}
\newtheorem{definition}[equation]{Definition}
\newtheorem{remark}[equation]{Remark}
\newcommand{\noqed}{\renewcommand{\qedsymbol}{}\ignorespaces}
\let\oldtocsubsection=\tocsubsection
\renewcommand{\tocsubsection}[2]{\hspace{3.5em}\(\cdot\)~\oldtocsubsection{#1}{#2}}
\newcommand{\cA}{\mathcal{A}}
\newcommand{\cB}{\mathcal{B}}
\newcommand{\cF}{\mathcal{F}}
\newcommand{\cP}{\mathcal{P}}
\newcommand{\inv}{^{-1}}
\newcommand{\smm}[1]{\left( \smallmatrix #1 \endsmallmatrix \right)}
\newcommand{\mat}[1]{\left( \begin{matrix} #1 \end{matrix} \right)}
\newcommand{\lt}{\mathfrak{LT}}
\newcommand{\idem}{\mathrm{Idem}\,}
\newcommand{\na}[1]{\noalign{\noindent#1}}
\newcommand{\pow}[1]{\mathbb{P}(#1)^{\mathrm{fin}}}
\newcommand{\dirlim}{\lim\limits_{\rightarrow}}
\begin{document}

\title{Triangular objects and systematic $K$-theory}

\date{\today}

\author{Thomas H\"uttemann}

\address{Thomas H\"uttemann\\ Queen's University Belfast\\ School of
  Mathematics and Physics\\ Pure Mathematics Research Centre\\ Belfast
  BT7~1NN\\ UK}

\email{t.huettemann@qub.ac.uk}

\urladdr{http://www.qub.ac.uk/puremaths/Staff/Thomas Huettemann/}

\subjclass[2010]{Primary 19D50; Secondary 18E05}

\author{Zuhong Zhang}

\address{Zhang Zuhong\\ Beijing Institute Of Technology\\ School of
  Mathematics\\ 5 South Zhongguancun Street, Haidian District\\ 100081
  Beijing\\ China}

\email{zuhong@hotmail.com}

\begin{abstract}
  We investigate modules over ``systematic'' rings.  Such rings are
  ``almost graded'' and have appeared under various names in the
  literature; they are special cases of the $G$-systems of
  \textsc{Grzeszczuk}. We analyse their $K$-theory in the presence of
  conditions on the support, and explain how this generalises and
  unifies calculations of graded and filtered $K$-theory scattered in
  the literature. Our treatment makes systematic use of the formalism
  of idempotent completion and a theory of triangular objects in
  additive categories, leading to elementary and transparent proofs
  throughout.
\end{abstract}

\maketitle

\tableofcontents

\section*{Introduction}

The aim of this note is to provide a unified treatment of several
results in algebraic $K$-theory, comparing ``graded'' (\resp,
``filtered'') $K$-theory of graded (\resp, filtered) rings with the
usual algebraic $K$-theory of the subring in degree (\resp, filtration
degree)~$0$. A typical case is the following result:

\begin{theorem*}[\textsc{Quillen} {\cite[p.~107,
    Proposition]{MR0338129}}]
  \label{thm:Q-orig}
  Let $B$ a positively graded ring (that is, a $\bZ$-graded ring with
  $B_{k} = \{0\}$ for $k<0$). Then there is a $\bZ[x,x^{-1}]$-linear
  isomorphism
  \begin{displaymath}
    \bZ[x,x^{-1}] \tensor_{\bZ} K_{n} (B_{0}) \iso K^{\mathrm{gr}}_{n}
    (B) \ , \quad x^{n} \tensor P \mapsto P \tensor_{B_{0}} \shift n B
    \ ,
  \end{displaymath}
  with $K^{\mathrm{gr}}_{n} (B)$ denoting the algebraic $K$\nbd-theory
  of the category of finitely generated $\bZ$\nbd-graded projective
  $B$-modules, and $\shift n B$ denoting the graded module with
  $\shift n B_{k} = B_{k-n}$.
\end{theorem*}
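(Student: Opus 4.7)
My plan is to construct the $K$-theory map as the $K$-functor applied to an exact functor and invert it by recognising $\mathrm{GrProj}(B)$ as the idempotent completion of a ``triangular'' additive category whose $K$-theory collapses to that of its diagonal.

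The forward functor is
\[
\Phi \colon \bigoplus_{n \in \bZ} \mathrm{Proj}(B_0) \longrightarrow \mathrm{GrProj}(B), \qquad (P_n)_n \longmapsto \bigoplus_n P_n \tensor_{B_0} \shift{n}{B},
\]
which is well-defined (since $P \tensor_{B_0} \shift{n}{B}$ is a graded summand of $(\shift{n}{B})^r$ whenever $P$ is a summand of $B_0^r$) and exact. The induced map on $K$-theory matches the one in the theorem. The $\bZ[x,x^{-1}]$-linearity follows because $\Phi$ intertwines the degree-shift autoequivalences on both sides.

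To invert $K_{*}(\Phi)$, I would consider the full additive subcategory $\cA \subseteq \mathrm{GrProj}(B)$ spanned by the shifts $\{\shift{n}{B}\}_{n \in \bZ}$. Its graded morphism groups are $\mathrm{Hom}^{\mathrm{gr}}(\shift{n}{B}, \shift{m}{B}) = B_{n-m}$, which vanish whenever $n < m$ by the positive-grading hypothesis; hence $\cA$ has a ``one-sided triangular'' hom-structure indexed by $\bZ$, with diagonal $B_0$. Since every f.g.\ graded projective $B$-module is, by definition, a summand of a finite sum of shifts, $\mathrm{GrProj}(B)$ is the idempotent completion of $\cA$. The core of the argument is then a devissage-type statement: that the $K$-theory of such a triangular additive category is insensitive to the strictly off-diagonal hom-groups $B_{n-m}$ (for $n>m$) and collapses onto the $K$-theory of its diagonal $\bigoplus_n B_0$, which after idempotent completion yields $\bigoplus_n K_*(B_0) \iso \bZ[x,x^{-1}] \tensor_{\bZ} K_*(B_0)$.

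The principal obstacle is precisely this devissage step. The natural filtration $M \supseteq M_{\geq 1} \supseteq \cdots$ of a f.g.\ graded projective $B$-module by its degree-$\geq k$ submodules, well-defined only thanks to positive grading, is the classical tool one reaches for; but its successive quotients $M_{\geq k}/M_{\geq k+1}$ are a priori only graded $B_0$-modules concentrated in a single degree, not obviously f.g.\ projective over $B_0$ in the absence of any flatness hypothesis on $B_n$, and the submodules $M_{\geq k}$ need not themselves lie in $\mathrm{GrProj}(B)$. The formalism of triangular objects together with idempotent completion must therefore replace this honest but problematic filtration by an abstract decomposition at the $K$-theoretic level, simultaneously sidestepping the projectivity and membership issues. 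Beyond this devissage, everything—functoriality, exactness, $\bZ[x,x^{-1}]$-linearity, and the passage between $K_0$ and higher $K_n$—is formal.
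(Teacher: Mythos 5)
You have the right framework in view: realising $\mathrm{GrProj}(B)$ as the idempotent completion of the additive category generated by the shifts $\shift{n}{B}$, observing that positive grading gives a one‑sided triangular hom‑structure, and aiming to collapse $K$‑theory onto the diagonal $\bigoplus_{n} K_{*}(B_{0})$. You have also correctly diagnosed why the classical degree‑filtration $M \supseteq M_{\geq 1} \supseteq \cdots$ is problematic at the module level. But your proposal stops exactly at the point where the real work begins: the ``devissage‑type statement'' that $K$‑theory of such a triangular category is insensitive to its off‑diagonal Hom‑groups is asserted, not argued, and it is precisely this statement that Part~1 of the paper is devoted to proving.

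Two things are missing. First, one cannot apply an additivity/characteristic‑filtration argument directly to a triangular category indexed by all of $\bZ$; the paper instead filters by \emph{finite} degree sets $S \subseteq \bZ$, identifies $\cF_{\bZ}[S]$ for $S = \{s_{1} > \cdots > s_{r}\}$ with the finite lower triangular category $\lt(\cF_{\bZ}[s_{q}];\, 1 \leq q \leq r)$, and only passes to the colimit over $S$ at the end (Lemma~\ref{lem:filtration} and step~\ref{at:filter} of the template). Second---and this is the substantive missing idea---the actual collapse is effected not by a filtration of modules, but by an \emph{admissible filtration of the identity functor} of the idempotent completion $\idem\lt(\cA_{1},\cA_{2})$. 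Lemma~\ref{lem:ses_functors} shows that for any lower triangular idempotent $p = \smm{p_{11}\\p_{21}&p_{22}}$ there is a sequence
\begin{displaymath}
0 \rTo (A_{2},p_{22}) \rTo (A_{1}\oplus A_{2}, p) \rTo (A_{1},p_{11}) \rTo 0
\end{displaymath}
which is split exact for every object (via the explicit but non‑natural splitting $\rho = \smm{p_{22}p_{21}&p_{22}}$) and whose outer terms are additive in $p$. The additivity theorem, applied to the resulting short exact sequence of functors $0 \to \epsilon_{2}S \to \id \to \epsilon_{1}Q \to 0$ with respect to the \emph{split} exact structure, gives the decomposition $K_{n}(\idem\lt(\cA_{1},\cA_{2})) \iso K_{n}(\idem\cA_{1}) \oplus K_{n}(\idem\cA_{2})$ (Lemma~\ref{lem:K_split}), and Proposition~\ref{prop:generalisations} extends this to finitely many blocks by induction. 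Notice this completely sidesteps your worry about $M_{k}$ failing to be projective over $B_{0}$: the sub‑ and quotient objects $(A_{2},p_{22})$ and $(A_{1},p_{11})$ are projective tautologically, being idempotents in $\cA_{2}$ and $\cA_{1}$ respectively. Without an argument of this kind your proposal is not a proof but a description of the theorem you wish someone would hand you.
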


The proof given in {\it loc.cit.\/}~is short but subtle, involving
certain non-canonical isomorphisms between various modules; it is
quite surprising that, after passing to suitable quotients, all
constructions become functorial and hence induce maps on
\textsc{Quillen} $K$-groups. Similar complications can be found, in
more explicit form, in~\cite{MR3095326} and~\cite{MR3169434}.

In this paper we propose an alternative approach which, while still
based on the additivity theorem (or a version of ``characteristic
filtrations''), is more explicit and transparent. In Part~1 we develop
an axiomatic setup for applying the additivity theorem to triangular
objects in additive categories. In Part~2 we introduce systematic
rings and modules, a notion that generalises and unifies both graded
and filtered algebra at once. In Part~3 we study the algebraic
$K$-theory of systematic rings; our computations specialise to various
known calculations of graded and filtered $K$-theory scattered in the
literature. We end the paper with remarks on the algebraic $K$-theory
of affine toric schemes.

\section*{Acknowledgements}

Work on this paper began during a research visit of the first author
to Beijing Institute of Technology in Summer 2014. Their hospitality
and financial support is gratefully acknowledged.

\part{General theory of lower triangular categories}

\section{The idempotent completion of an additive category}
\label{sec:idem}

Let $\cA$ be an additive category. The {\it idempotent completion},
or {\it \textsc{Karoubi} envelope}, of~$\cA$ is the additive
category~$\idem \cA$ defined as follows: Objects are pairs $(A,p)$
with $p \colon A \rTo A$ an idempotent morphism in~$\cA$ (that is,
objects are ``projections'' in~$\cA$); a morphism $f \colon (A,p) \rTo
(B,q)$ is a morphism $f \colon A \rTo B$ in~$\cA$ such that $qfp = f$
(or, equivalently, $fp=f=qf$); in particular, $\idem\cA\big((A,p),
(A', p') \big)$ is a subset of $\cA(A,A')$. Identity morphisms are
given by $\id_{(A,p)} = p$, and composition of morphisms is inherited
from composition in~$\cA$.

The functor $A \mapsto (A, \id_{A})$ and $f \mapsto f$ is an embedding
of~$\cA$ as a full subcategory of~$\idem\cA$. It is an equivalence
if and only if all idempotents in~$\cA$ split (\ie, if and only if for
every idempotent morphism $p \colon A \rTo A$ there exist morphisms $r
\colon A \rTo A'$ and $s \colon A' \rTo A$ such that $r \circ s =
\id_{A'}$ and $p = s \circ r$).

A standard example is the category~$\cA$ with objects the based
finitely generated free $R$-modules $R^{n}$, $n \geq 0$, and morphisms
the $R$-linear maps, for some fixed unital ring~$R$. The
category~$\cA$ is equivalent to the category of all finitely generated
free $R$-modules, and its idempotent completion $\idem\cA$ is
equivalent to the category of finitely generated projective
$R$-modules.

A functor $\Phi \colon \cA \rTo \cB$ between additive categories
induces a functor
\begin{displaymath}
  \idem \Phi = \hat \Phi \colon \idem \cA \rTo \idem \cB \ , \quad
  (F,p) \mapsto \big( \Phi(F), \Phi(p) \big) \ ;
\end{displaymath}
for a morphism $f \colon (F,p) \rTo (G,q)$ we have
\begin{displaymath}  \hat \Phi(f) = \Phi(f) \colon
  \big( \Phi(F), \Phi(p) \big) \rTo \big( \Phi(G), \Phi(q) \big) \ 
  \in \idem \cB \ ,
\end{displaymath}
as $f=qfp$ implies $\Phi(f) = \Phi(q) \Phi(f) \Phi(p)$ by
functoriality.

\begin{lemma}
  \label{lem:additive}
  If $\Phi \colon \cA \rTo \cB$ is additive then so is the induced
  functor $\hat \Phi \colon \idem\cA \rTo \idem\cB$.
\end{lemma}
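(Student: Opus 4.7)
The plan is to unpack the additive structure inherited by $\idem\cA$ from $\cA$ and then transfer additivity of $\Phi$ verbatim. Recall that a functor between additive categories is additive precisely when it preserves the abelian group structure on hom-sets (equivalently, preserves finite biproducts), so it suffices to check these two things for $\hat\Phi$.

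First I would verify that $\idem\cA$ is genuinely additive, and in particular identify the group structure on $\idem\cA\big((A,p),(B,q)\big)$. Since this hom-set is the subset $\{f \in \cA(A,B) \mid qfp = f\}$, closure under addition is immediate from linearity of composition in $\cA$: if $qfp = f$ and $qgp = g$, then $q(f+g)p = f+g$. The zero element is the zero morphism of $\cA$, which lies in every such subset. Biproducts in $\idem\cA$ are given by $(A,p) \oplus (B,q) = (A \oplus B,\, p \oplus q)$, with structural maps inherited from the ambient biproduct in $\cA$ precomposed/postcomposed with $p$ and $q$ as appropriate; one checks the universal property directly using the splitting property of idempotents in the hom-groups.

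With the structure in hand, the additivity of $\hat\Phi$ on morphisms is a one-line computation: for $f, g \colon (A,p) \rTo (B,q)$,
\begin{displaymath}
  \hat\Phi(f+g) = \Phi(f+g) = \Phi(f) + \Phi(g) = \hat\Phi(f) + \hat\Phi(g),
\end{displaymath}
using only that $\Phi$ is additive on $\cA(A,B)$. Preservation of the zero morphism is analogous. For biproducts, I would apply $\Phi$ to the structural morphisms of $(A,p) \oplus (B,q)$; because $\Phi$ preserves biproducts in $\cA$, it carries $\Phi(p \oplus q)$ to the biproduct idempotent $\Phi(p) \oplus \Phi(q)$ on $\Phi(A) \oplus \Phi(B)$, exhibiting $\hat\Phi\big((A,p) \oplus (B,q)\big)$ as a biproduct of $\hat\Phi(A,p)$ and $\hat\Phi(B,q)$ in $\idem\cB$.

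There is no real obstacle here; the statement is essentially a bookkeeping exercise confirming that the idempotent-completion construction is $2$-functorial on the $2$-category of additive categories and additive functors. The only mild subtlety, if one insists on raising it, is making sure that the identity morphisms $\id_{(A,p)} = p$ (not $\id_A$) are respected — which they are, since $\Phi(p)$ is precisely the identity of $\hat\Phi(A,p)$ by definition.
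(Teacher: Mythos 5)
Your argument is correct and is essentially the paper's proof: the hom-sets of $\idem\cA$ are subgroups of those of $\cA$, and $\hat\Phi$ acts on underlying morphisms exactly as $\Phi$ does, so additivity is inherited term by term. The extra material on biproducts and on verifying that $\idem\cA$ is itself additive is fine but not needed, since additivity of a functor between additive categories reduces to the hom-group statement you already proved.
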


\begin{proof}
  By definition of additive functor, the functor~$\Phi$ yields group
  homomorphisms $\cA(A,A') \rTo \cB \big( \Phi(A), \Phi(A') \big)$.
  Hence so does~$\hat \Phi$ as its effect on underlying morphisms is
  that of~$\Phi$; that is, $\hat\Phi$ is additive.
\end{proof}

Suppose that $\Phi,\Psi \colon \cA \rTo \cB$ are additive
functors, and that $\tau \colon \Phi \rTo^\natdot \Psi$ is a natural
transformation. Then $\hat \tau$, defined by
\begin{displaymath}
  \hat \tau_{(A,p)} = \Psi(p) \circ \tau_{A} \colon \big(
  \Phi(A), \Phi(p) \big) \rTo \big( \Psi(A), \Psi(p) \big) \ ,
\end{displaymath}
is a natural transformation of functors $\hat \Phi \rTo^\natdot \hat
\Psi$. Indeed, we have
\begin{align*}
  \Psi(p) \circ \hat \tau_{(A,p)} \circ \Phi(p) & = \Psi(p) \circ \big(
  \Psi(p) \circ \tau_{A}) \big) \circ \Phi(p) && \text{(definition of
    \(\hat\tau\))}\\
  & = \Psi(p) \circ \big( \tau_{A} \circ \Phi(p) \big) \circ \Phi(p)
  && \text{(naturality of \(\tau\))}\\
  & = \Psi(p) \circ \big( \tau_{A} \circ \Phi(p) \big) &&
  \text{(\(\Phi(p)\) idempotent)} \\
  & = \Psi(p) \circ \big( \Psi(p) \circ \tau_{A}) && \text{(naturality
    of \(\tau\))}\\ 
  &= \Psi(p) \circ \tau_{A} && \text{(\(\Psi(p)\) idempotent)} \\
  &= \hat \tau_{(A,p)} && \text{(definition of \(\hat\tau\))}
\end{align*}
so that $\hat \tau_{(A,p)}$ is a morphism
in~$\idem \cB$. To verify naturality, fix a morphism $a \colon (A,p) \rTo (A',
p')$ in~$\idem \cA$. Then we compute
\begin{align*}
  \hat \tau_{(A', p')} \circ \hat \Phi(a) &= \Psi(p') \circ \tau_{A'} \circ
  \Phi(a) && \text{(definition of \(\hat \tau\))}\\
  &= \Psi(p') \circ \Psi(a) \circ \tau_{A} && \text{(naturality of~\(\tau\))}
  \\
  &= \Psi(a) \circ \Psi(p) \circ \tau_{A} && \text{(as \(p'a=a=ap\))} \\
  &= \hat\Psi(a) \circ \hat \tau_{(A,p)} && \text{(definition of
    \(\hat \tau\))} \ .
\end{align*}

\begin{lemma}
  \label{lem:equivalent}
  If $\tau$ is a natural isomorphism then $\hat \tau$ is a natural
  isomorphism as well. Equivalent additive categories thus have
  equivalent idempotent completions.
\end{lemma}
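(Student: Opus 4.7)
The plan is to produce an explicit inverse for~$\hat\tau$ from the inverse $\sigma = \tau\inv \colon \Psi \rTo^\natdot \Phi$ of~$\tau$, and then to deduce the statement about equivalences by transporting the unit and counit of an equivalence along the construction $\hat{(-)}$.

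First I would form the natural transformation $\hat\sigma \colon \hat\Psi \rTo^\natdot \hat\Phi$ using the construction recalled just before the lemma, so that $\hat\sigma_{(A,p)} = \Phi(p) \circ \sigma_A$. It then suffices, at each object $(A,p) \in \idem\cA$, to verify that $\hat\sigma_{(A,p)}$ and $\hat\tau_{(A,p)}$ are mutually inverse in~$\idem\cB$. The key computation is
\begin{align*}
  \hat\tau_{(A,p)} \circ \hat\sigma_{(A,p)}
  &= \Psi(p) \circ \tau_A \circ \Phi(p) \circ \sigma_A \\
  &= \Psi(p) \circ \Psi(p) \circ \tau_A \circ \sigma_A \\
  &= \Psi(p) = \id_{(\Psi(A), \Psi(p))} \ ,
\end{align*}
where the first step rewrites $\tau_A \circ \Phi(p)$ as $\Psi(p) \circ \tau_A$ by naturality of~$\tau$ applied to the endomorphism~$p$, and the remaining reductions use idempotence of~$\Psi(p)$ together with $\tau \circ \sigma = \id$. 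The opposite composite $\hat\sigma_{(A,p)} \circ \hat\tau_{(A,p)} = \id_{(\Phi(A), \Phi(p))}$ is handled symmetrically using naturality of~$\sigma$ and idempotence of~$\Phi(p)$.

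For the second assertion, let $\Phi \colon \cA \rTo \cB$ and $\Psi \colon \cB \rTo \cA$ be an equivalence with natural isomorphisms $\eta \colon \id_\cA \rTo^\natdot \Psi\Phi$ and $\epsilon \colon \Phi\Psi \rTo^\natdot \id_\cB$. A direct inspection of definitions yields $\widehat{\Psi\Phi} = \hat\Psi \hat\Phi$, $\widehat{\Phi\Psi} = \hat\Phi \hat\Psi$, and $\widehat{\id_\cA} = \id_{\idem\cA}$, $\widehat{\id_\cB} = \id_{\idem\cB}$. Combined with the first part of the lemma (and Lemma~\ref{lem:additive} for additivity), this converts $\eta$ and $\epsilon$ into natural isomorphisms exhibiting $\hat\Phi$ and $\hat\Psi$ as quasi-inverse additive equivalences $\idem\cA \simeq \idem\cB$.

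The only real point to watch is bookkeeping: morphisms in $\idem\cB$ are required to satisfy $qfp=f$, so the factors $\Psi(p)$ and $\Phi(p)$ appearing in $\hat\tau$ and $\hat\sigma$ must be kept in their correct positions. Once naturality of~$\tau$ has been applied to the endomorphism $p \colon A \rTo A$, the rest follows mechanically from idempotence, so no substantial obstacle is expected.
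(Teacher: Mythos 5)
Your proof is correct and follows essentially the same approach as the paper: both pass to $\widehat{\tau\inv}$ (your $\hat\sigma$) using the construction of $\hat{(-)}$ on natural transformations, and then verify that the composites at each object $(A,p)$ reduce to $\Phi(p)$ respectively $\Psi(p)$ via one application of naturality and idempotence of $p$. The only difference is cosmetic (the paper checks $\widehat{\tau\inv}\circ\hat\tau$ first and you check $\hat\tau\circ\hat\sigma$), and you additionally spell out the deduction of the second sentence via $\widehat{\Psi\Phi}=\hat\Psi\hat\Phi$ etc., which the paper leaves implicit.
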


\begin{proof}
  As $\tau\inv$ is a natural transformation $\Psi \rTo^\natdot \Phi$ the
  construction above yields a natural transformation
  $\widehat{\tau\inv} \colon \hat \Psi \rTo^\natdot \hat \Phi$. We claim
  that this is the inverse of~$\hat\tau$. Indeed, for an object $(A,p)
  \in \idem\cA$ we calculate
  \begin{align*}
    \big( \widehat{\tau\inv} \circ \hat \tau \big)_{(A,p)} &=
    \widehat{\tau\inv}_{(A,p)} \circ \hat \tau_{(A,p)} \\
    &= \big( \Phi(p) \circ \tau\inv_{A} \big) \circ \big( \Psi(p)
    \circ
    \tau_{A} \big) && \text{(definition of \(\hat\tau\) and \(\widehat{\tau\inv}\))}\\
    &= \Phi(p) \circ \tau\inv_{A} \circ \tau_{A} \circ \Phi(p) &&
    \text{(naturality of~\(\tau\))}\\
    &= \Phi(p) && \text{(as \(p \circ p = p\))}\\
    &= \id_{\hat \Phi (A, p)}
  \end{align*}
  so that $\widehat{\tau\inv} \circ \hat \tau = \id_{\hat\Phi}$.  A
  similar calculation shows $\hat \tau \circ \widehat{\tau\inv} =
  \id_{\hat\Psi}$ as well.
\end{proof}

We will from now on drop the decoration ``$\,\hat\ \,$'' and
let~$\Phi$ denote both the original additive functor and the induced
functor~$\hat \Phi$ discussed above. --- We will make use of the
following fact, which can be verified by explicit calculation:

\begin{lemma}
  \label{lem:filtration}
  Idempotent completion is compatible with filtered colimits:
  \begin{displaymath}
    \idem \big(\dirlim \cA[S]\big) = \dirlim \big(\idem \cA[S]\big) \ ,
  \end{displaymath}
  where $S$ varies over a directed poset (or, more generally, a small
  filtered category) and $S \mapsto \cA[S]$ is a system of additive
  categories and additive functors. \qed
\end{lemma}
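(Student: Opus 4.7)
The plan is to exhibit a canonical isomorphism (or at least an equivalence) of additive categories by constructing a comparison functor and verifying that it is essentially surjective and fully faithful. Applying the universal property of the filtered colimit to the coherent system of functors $\idem \cA[s] \rTo \idem \big(\dirlim \cA[S]\big)$ induced by the structure maps $\cA[s] \rTo \dirlim \cA[S]$ (and functoriality of~$\idem$) produces a comparison functor
$$\Phi \colon \dirlim \big(\idem \cA[S]\big) \rTo \idem \big(\dirlim \cA[S]\big) \ ,$$
and the task reduces to verifying that $\Phi$ is an equivalence.

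For essential surjectivity, given $(A,p) \in \idem\big(\dirlim \cA[S]\big)$, I would use the standard description of filtered colimits of additive categories to pick a representative $A' \in \cA[s]$ of~$A$ and a representative $p' \colon A' \rTo A'$ of~$p$ at some stage $t \geq s$. The relation $p \circ p = p$ holds in $\dirlim \cA[S]$, and since filtered colimits of abelian groups commute with finite limits, there exists $u \geq t$ at which $p' \circ p'$ and~$p'$ already coincide in~$\cA[u]$; thus $p'$ is genuinely idempotent at stage~$u$. The pair $(A', p') \in \idem \cA[u]$ then defines an object of $\dirlim \idem \cA[S]$ whose image under~$\Phi$ is canonically isomorphic to~$(A,p)$.

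For full faithfulness, I would first arrange two objects at a common stage~$s$ by filteredness, giving representatives $(A', p')$, $(B', q') \in \idem \cA[s]$. A morphism $\Phi(A', p') \rTo \Phi(B', q')$ is, by the definition of $\idem$, a morphism $f \colon A \rTo B$ in $\dirlim \cA[S]$ satisfying $q f p = f$. Choosing a representative $f' \colon A' \rTo B'$ at some stage $t \geq s$ and passing to a further stage $u \geq t$ at which $q' f' p' = f'$ already holds in~$\cA[u]$ yields a morphism in $\idem \cA[u]$ whose image in $\dirlim \idem \cA[S]$ is a preimage of~$f$; faithfulness is the same argument read backwards, the point being that two morphisms that become equal in $\dirlim \cA[S]$ become equal at some finite stage.

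The only obstacle is careful bookkeeping with representatives at various stages and the repeated use of filteredness to promote relations valid in the colimit to relations already valid at a finite stage. No step is conceptually deep; this routine character is no doubt why the authors deem the lemma verifiable by ``explicit calculation.''
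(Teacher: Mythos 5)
Your proof is correct and complete. The paper itself offers no argument for this lemma, dismissing it as verifiable ``by explicit calculation,'' and your construction of the comparison functor followed by the stage-lifting argument --- using that Hom-sets in a filtered colimit of additive categories are filtered colimits of abelian Hom-groups, so that identities like $p'\circ p'=p'$ or $q'f'p'=f'$ that hold in the colimit already hold at some finite stage, while faithfulness follows from the exactness of filtered colimits of abelian groups --- is exactly the calculation being alluded to. The only cosmetic slip is a mild abuse of notation: once you pass from stage $s$ to a later stage $u$, the objects written $A'$, $p'$ should really be their images in $\cA[u]$ under the structure functor, a point you yourself flag as the required bookkeeping; and one could quibble that the stated ``$=$'' is at best an equivalence of categories rather than an identity, but the paper's own formulation is equally loose and an equivalence is all that is used downstream.
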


\section{Exact structures}

In this note we consider all additive categories as exact categories
with the split exact structure, that is, by declaring a sequence to be
exact if and only if it is split exact. By definition, a sequence $0
\rTo A \rTo B \rTo C \rTo 0$ is split exact if there exists an
isomorphism $\chi \colon B \rTo A \oplus C$ resulting in a commutative
ladder diagram
\begin{diagram}[small]
  0 & \rTo & A & \rTo & B & \rTo & C & \rTo & 0 \\
  && \dTo<{\id_{A}} && \dTo<{\chi} && \dTo<{\id_{C}} \\
  0 & \rTo & A & \rTo[l>=4em]^{\text{incl}} & A \oplus C &
  \rTo[l>=4em]^{\text{proj}} & C & \rTo & 0 & \ .
\end{diagram}

\section{Lower triangular categories}
\label{sec:lt_cat}

Let $\cA$ be an additive category, and let $\cA_{1}$ and~$\cA_{2}$ be
full additive subcategories. We define the lower triangular category
$\lt(\cA_{1}, \cA_{2})$ to be the category which has objects
\begin{displaymath}
  F = F_{1} \oplus F_{2} \ , \quad F_{j} \in \cA_{j}
\end{displaymath}
(the direct sum decomposition being part of the data), and has
morphisms the lower triangular matrices
\begin{displaymath}
  f = \mat{f_{11}\\f_{21}&f_{22}} \colon F_{1} \oplus F_{2} = F \rTo G
  = G_{1} \oplus G_{2}
\end{displaymath}
with $f_{ij} \in \cA(F_{j}, G_{i})$. The category $\lt(\cA_{1},
\cA_{2})$ comes with a faithful (but not full) forgetful functor
to~$\cA$, for which we do not introduce special notation. Perhaps more
importantly, $\lt(\cA_{1}, \cA_{2})$ is itself an additive category;
the direct sum of $F_{1} \oplus F_{2}$ and $G_{1} \oplus G_{2}$
in~$\lt(\cA_{1}, \cA_{2})$ is described by the following sum system:
\begin{displaymath}
  F_{1} \oplus F_{2} \pile{\rTo^{\mat{\smm{\id\\0}\\ 0 & \smm{\id\\0}}}
    \\ \lTo_{\mat{\smm{\id&0}\\ 0 & \smm{\id&0}}}}
  (F_{1} \oplus G_{1}) \oplus
  (F_{2} \oplus G_{2}) \pile{\rTo^{\mat{\smm{0&\id}\\ 0 & \smm{0&\id}}}
    \\ \lTo_{\mat{\smm{0\\ \id}\\ 0 &
        \smm{0\\ \id}}}}
  G_{1} \oplus G_{2}
\end{displaymath}

For future reference we record a rather trivial calculation:

\begin{lemma}
  Suppose that the morphism
  \begin{displaymath}
    \mat{p_{11}\\p_{21}&p_{22}} \colon (A_{1}\oplus A_{2}) \rTo
    (A_{1}\oplus A_{2})
  \end{displaymath}
  in $\lt(\cA_{1}, \cA_{2})$ is idempotent. Then we have equalities
  \begin{subequations}
    \begin{align}
      p_{11}^{2} &= p_{11} \ , \label{eq:p11}\\
      p_{22}^{2} &= p_{22} \ ,\label{eq:p22}\\
      p_{21}p_{11} + p_{22}p_{21} &= p_{21} \ ; \label{eq:p21} \\
      \na{the latter implies, by multiplication with~\(p_{11}\) from
        the right and re-arranging,}
      p_{22}p_{21}p_{11} &= 0 \ . \label{eq:0}
    \end{align}
  \end{subequations}
  \qed
\end{lemma}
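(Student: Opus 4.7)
The plan is a direct matrix computation, as the statement itself suggests (``rather trivial calculation''). I would compute the square of the given lower triangular matrix using the rules for matrix composition inherited from the additive category~$\cA$, and compare the result entry-by-entry with the original.

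More concretely, composition in $\lt(\cA_1,\cA_2)$ is performed by the usual matrix multiplication formula (this is built into the sum system spelled out just before the lemma), so squaring the matrix yields
\[
\begin{pmatrix} p_{11} & \\ p_{21} & p_{22} \end{pmatrix}^{2}
=
\begin{pmatrix} p_{11}^{2} & \\ p_{21}p_{11}+p_{22}p_{21} & p_{22}^{2}\end{pmatrix}.
\]
Equating this to $\begin{pmatrix} p_{11} & \\ p_{21} & p_{22}\end{pmatrix}$ in each of the three non-zero slots gives immediately the three relations~\eqref{eq:p11}, \eqref{eq:p22} and~\eqref{eq:p21}.

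For the final relation~\eqref{eq:0}, I would take~\eqref{eq:p21}, post-compose with $p_{11}$ on the right to obtain $p_{21}p_{11}^{2} + p_{22}p_{21}p_{11} = p_{21}p_{11}$, and then replace $p_{11}^{2}$ by $p_{11}$ via~\eqref{eq:p11}, after which the term $p_{21}p_{11}$ cancels on both sides, leaving $p_{22}p_{21}p_{11}=0$.

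There is no real obstacle here; the only point worth noting is that the matrix calculus needs to be valid in the abstract additive setting, but this is precisely what the sum system description of $\lt(\cA_1,\cA_2)$ given above the lemma guarantees. Since the author explicitly signals that the lemma will be proved by~\qed\ without further comment, the ``proof'' is really just the display of the squared matrix together with the one-line derivation of~\eqref{eq:0}.
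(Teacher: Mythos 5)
Your computation is correct and matches exactly what the paper intends: squaring the lower triangular matrix entry-by-entry gives~\eqref{eq:p11}, \eqref{eq:p22} and~\eqref{eq:p21}, and post-composing~\eqref{eq:p21} with~$p_{11}$ and using~\eqref{eq:p11} gives~\eqref{eq:0}, precisely as the parenthetical remark in the lemma itself indicates. The paper leaves this unproved (marking it \qed), so your write-up simply makes explicit the intended trivial calculation.
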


There are embeddings of $\cA_{1}$ and~$\cA_{2}$ as full subcategories
of~$\lt(\cA_{1},\cA_{2})$, given by sending an object $A$ to $A \oplus
0$ and $0 \oplus A$, respectively. These functors yield full
embeddings (often suppressed from the notation in the following)
\begin{subequations}
  \begin{align}
    \epsilon_{1} \colon \idem\cA_{1} &\rTo \idem\lt(\cA_{1},\cA_{2}) \
    , & (A_{1},
    p_{11}) &\mapsto \Big( A_{1} \oplus 0, \mat{p_{11}\\0&0}
    \Big) \label{eq:epsilon1} \\
    \na{and} %
    \epsilon_{2} \colon \idem\cA_{2} &\rTo \idem\lt(\cA_{1},\cA_{2}) \
    , & (A_{2}, p_{22}) & \mapsto \Big( 0 \oplus A_{2},
    \mat{0\\0&p_{22}} \Big) \ . \label{eq:epsilon2}
  \end{align}
\end{subequations}

\section{Subobject and quotient object functors}
\label{sec:functors}

We keep the notation from the previous section. There are
functors
\begin{gather*}
  S \colon \lt(\cA_{1}, \cA_{2}) \rTo \cA_{2} \ , \quad A_{1} \oplus
  A_{2} \mapsto A_{2} \ , \quad \mat{f_{11}\\f_{21}&f_{22}} \mapsto f_{22} \\
  \na{and} Q \colon \lt(\cA_{1}, \cA_{2}) \rTo \cA_{1} \ , \quad A_{1}
  \oplus A_{2} \mapsto A_{1} \ , \quad \mat{f_{11}\\f_{21}&f_{22}}
  \mapsto f_{11}
\end{gather*}
which induce functors on idempotent completions
\begin{align*}
  S \colon \big( A_{1} \oplus A_{2},
  \mat{\alpha_{11}\\\alpha_{21}&\alpha_{22}} \big) & \mapsto (A_{2},
  \alpha_{22}) \\
  \na{and} Q \colon \big( A_{1} \oplus A_{2},
  \mat{\alpha_{11}\\\alpha_{21}&\alpha_{22}} \big) & \mapsto (A_{1},
  \alpha_{11}) \ .
\end{align*}

\begin{lemma}
  \label{lemma:additive_functors}
  The functors
  \begin{gather*}
    S \colon \idem\lt(\cA_{1},\cA_{2}) \rTo \idem\cA_{2} \\
    \na{and}
    Q \colon \idem\lt(\cA_{1},\cA_{2}) \rTo \idem\cA_{1}
  \end{gather*}
  defined above are exact (that is, map split short exact sequences to
  split short exact sequences).
\end{lemma}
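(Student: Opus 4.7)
My plan is to reduce the claim to the observation that any additive functor between additive categories, each endowed with the split exact structure, is automatically exact. Given a split exact sequence $0 \to A \to B \to C \to 0$ in $\idem\lt(\cA_{1},\cA_{2})$ with splitting isomorphism $\chi \colon B \rTo A \oplus C$, applying an additive functor $\Phi$ yields an isomorphism $\Phi(\chi) \colon \Phi(B) \rTo \Phi(A) \oplus \Phi(C)$, and functoriality transports the commutative ladder diagram. So it will suffice to show that the functors $S$ and $Q$ on the idempotent completions are additive.

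By Lemma~\ref{lem:additive}, the induced functors on the Karoubi envelopes are additive as soon as the underlying functors $S, Q \colon \lt(\cA_{1},\cA_{2}) \rTo \cA_{2}, \cA_{1}$ are additive. So I would reduce to the pre-completion setting, where additivity can be read off directly from the definitions.

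For the group-homomorphism condition on Hom-sets, I would note that addition of morphisms in $\lt(\cA_{1},\cA_{2})$ is entry-wise on lower triangular matrices, so $S(f+g) = f_{22} + g_{22} = S(f) + S(g)$ and similarly $Q(f+g) = f_{11} + g_{11} = Q(f)+Q(g)$. For preservation of direct sums, I would apply $S$ and $Q$ to the explicit sum system displayed in Section~\ref{sec:lt_cat}: applying $Q$ collapses the triangular matrices to their $(1,1)$-entries, producing exactly the canonical sum system for $F_{1} \oplus G_{1}$ in $\cA_{1}$; applying $S$ does the analogous thing for $F_{2} \oplus G_{2}$ in $\cA_{2}$. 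Hence $S$ and $Q$ carry the direct sum in $\lt(\cA_{1},\cA_{2})$ isomorphically onto direct sums in $\cA_{2}$ and $\cA_{1}$.

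There is no real obstacle to overcome — the content of the lemma is essentially bookkeeping. The only point at which one must exercise a modicum of care is in invoking Lemma~\ref{lem:additive} to transfer additivity from $\lt(\cA_{1},\cA_{2})$ to $\idem\lt(\cA_{1},\cA_{2})$, so as not to have to re-verify additivity in the presence of the idempotent data $\smm{\alpha_{11}\\ \alpha_{21} & \alpha_{22}}$ by hand.
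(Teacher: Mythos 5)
Your proposal is correct and follows the paper's proof exactly: reduce to additivity of the pre-completion functors $S,Q$, transfer via Lemma~\ref{lem:additive}, and note that additivity equals exactness for the split exact structure. (The only superfluous point is the separate check that direct sums are preserved; being a group homomorphism on Hom-sets already forces preservation of biproducts, since the biproduct is characterised by equations among morphisms that any additive functor preserves.)
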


\begin{proof}
  The functors $S \colon \lt(\cA_{1}, \cA_{2}) \rTo \cA_{2}$ and $Q
  \colon \lt(\cA_{1}, \cA_{2}) \rTo \cA_{1}$ are additive, hence so
  are the induced functors after idempotent completion, by
  Lemma~\ref{lem:additive}. This is equivalent to the assertion under
  consideration.
\end{proof}

\begin{lemma}
\label{lem:ses_functors}
  There is a short exact sequence of functors
  \begin{displaymath}
    0 \rTo S \rTo \id_{\idem \lt(\cA_{1}, \cA_{2})} \rTo Q \rTo 0 \ .
  \end{displaymath}
  More precisely,
  \begin{enumerate}
  \item for every object $A = (A_{1}\oplus A_{2},
    \smm{p_{11}\\p_{21}&p_{22}})$ of~$\idem\lt(\cA_{1}, \cA_{2})$
    there is a sequence in~$\idem\cA$
    \begin{equation}
      \label{eq:ses}
      0 \rTo S(A) \rTo[l>=4em]_{\sigma}^{\smm{0\\p_{22}}} A
      \rTo_{\pi}[l>=4em]^{\smm{p_{11}&0}} Q(A) \rTo 0 \ ,
    \end{equation}
    and this sequence is split exact in the following way: There
    exists a morphism $\rho \colon A \rTo S(A)$ in $\idem\cA$
    such that the induced map
    \begin{displaymath}
      \mat{\pi\\ \rho}\colon A \rTo Q(A) \oplus S(A)
    \end{displaymath}
    is an isomorphism in $\idem\lt(\cA_{1}, \cA_{2})$;
  \item the sequence~\eqref{eq:ses} is natural in~$A$ with respect to
    morphisms in $\idem\lt(\cA_{1}, \cA_{2})$.
  \end{enumerate}
\end{lemma}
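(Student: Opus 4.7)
The plan is to give completely explicit formulas for $\sigma$, $\pi$, a retraction $\rho$, and for the inverse of the induced map $\mat{\pi\\\rho}$, then verify everything by direct matrix calculations invoking only the four idempotent identities \eqref{eq:p11}--\eqref{eq:0} of the preceding lemma. First I would check that the given formulas $\sigma = \smm{0\\p_{22}}$ and $\pi = \smm{p_{11}&0}$ define morphisms in $\idem\cA$: the conditions $p\sigma = \sigma = \sigma p_{22}$ and $p_{11}\pi = \pi = \pi p$ follow instantly from the idempotency of $p_{11}$ and~$p_{22}$.

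For the splitting in part~(1), I would propose the retraction $\rho = \smm{p_{22}p_{21} & p_{22}} \colon A \to S(A)$ together with the candidate inverse
\[
  \Theta\inv := \mat{p_{11} & 0 \\ p_{21}p_{11} & p_{22}} \colon Q(A) \oplus S(A) \to A
\]
of $\Theta := \mat{\pi\\\rho}$. That each of $\rho$, $\Theta$, and $\Theta\inv$ is a well-defined morphism in the relevant idempotent completion, and that the two composites recover the idempotents $p$ and $\smm{p_{11}\\0 & p_{22}}$ serving as the identities of $A$ and $Q(A)\oplus S(A)$ respectively, comes down to routine block-matrix multiplication. The essential mechanism is that equation~\eqref{eq:0} kills the cross terms in the lower-left entry of $\Theta\circ\Theta\inv$, while~\eqref{eq:p21} is responsible for the re-emergence of the entry $p_{21}$ in $\Theta\inv\circ\Theta$.

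For naturality in part~(2), any morphism $f = \smm{f_{11}\\f_{21}&f_{22}}\colon A \to A'$ in $\idem\lt(\cA_1,\cA_2)$ satisfies $p'fp = f$, and extracting the diagonal blocks yields $p'_{11}f_{11}p_{11} = f_{11}$ and $p'_{22}f_{22}p_{22} = f_{22}$. From these identities the commutativity of the two squares $f\sigma = \sigma' S(f)$ and $\pi' f = Q(f)\pi$ drops out immediately by block multiplication.

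The only non-routine step is arriving at the correct formulas for $\rho$ and $\Theta\inv$: a naive block-diagonal choice fails because the idempotent~$p$ is itself not block-diagonal, so one is forced to solve the splitting equations in lower-triangular form. The correction terms $p_{22}p_{21}$ and $p_{21}p_{11}$ are dictated precisely by the cancellation~\eqref{eq:0} and the decomposition~\eqref{eq:p21} respectively; once they are in place, the remainder of the argument is a sequence of short and transparent matrix computations.
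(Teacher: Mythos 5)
Your proposal is correct and follows essentially the same route as the paper: the same retraction $\rho = \smm{p_{22}p_{21} & p_{22}}$, the same inverse $\Theta\inv = \smm{p_{11}\\p_{21}p_{11} & p_{22}}$ (called $M$ in the paper), and the same verification that the two composites reduce to the respective identity idempotents via \eqref{eq:p21} and \eqref{eq:0}. The naturality check via extraction of diagonal blocks from $p'fp=f$ likewise matches the paper's routine calculation.
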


\begin{proof}
  Let $A = (A_{1}\oplus A_{2}, \smm{p_{11}\\p_{21}&p_{22}})$ be an
  object of~$\idem\lt(\cA_{1}, \cA_{2})$. Then we have the
  sequence~\eqref{eq:ses} of composable morphisms in~$\idem\cA$; explicitly:
  \begin{displaymath}
    0 \rTo (A_{2}, p_{22}) \rTo^{\smm{0\\p_{22}}}_{\sigma}
    \Big(A_{1}\oplus A_{2}, \mat{p_{11}\\p_{21}&p_{22}} \Big)
    \rTo^{\smm{p_{11}&0}}_{\pi} (A_{1}, p_{11}) \rTo 0 \ . 
  \end{displaymath}
  Clearly $\pi \circ \sigma = 0$, and note that as objects of
  $\idem\lt(\cA_{1}, \cA_{2})$ we have
  \begin{displaymath}
    (A_{1}, p_{11}) \oplus (A_{2}, p_{22}) = \Big(A_{1} \oplus A_{2},
    \mat{p_{11}\\&p_{22}}\Big) \ .
  \end{displaymath}
  We define $\rho \colon A \rTo S(A)$ to be the morphism in~$\idem\cA$
  \begin{displaymath}
    \rho = \mat{p_{22}p_{21}&p_{22}} \colon \Big(A_{1}\oplus
    A_{2}, \mat{p_{11}\\p_{21}&p_{22}} \Big) \rTo (A_{2}, p_{22}) \ ;
  \end{displaymath}
  we claim that $\rho$ and~$\pi$ yield a morphism in
  $\idem\lt(\cA_{1}, \cA_{2})$
  \begin{multline*}
    \mat{\pi\\ \rho} = \mat{p_{11}\\p_{22}p_{21}&p_{22}} \colon \\
    \Big(A_{1}\oplus A_{2}, \mat{p_{11}\\p_{21}&p_{22}} \Big) \rTo
    \Big(A_{1} \oplus A_{2}, \mat{p_{11}\\&p_{22}}\Big) \ .
  \end{multline*}
  Indeed, using the equations~\eqref{eq:p11} and~\eqref{eq:p22} we
  calculate
  \begin{displaymath}
    \mat{p_{11}\\&p_{22}} \cdot
    \mat{p_{11}\\p_{22}p_{21}&p_{22}} \cdot
    \mat{p_{11}\\p_{21}&p_{22}} =
    \mat{p_{11}\\p_{22}p_{21}p_{11}+p_{22}p_{21}&p_{22}} \ ,
  \end{displaymath}
  which coincides with the map~$\smm{\pi\\\rho}$ by~\eqref{eq:0} as
  required.

  Next, we define
  \begin{displaymath}
    M = \mat{p_{11}\\p_{21}p_{11}&p_{22}} \colon 
    \Big(A_{1} \oplus A_{2}, \mat{p_{11}\\&p_{22}}\Big) \rTo
    \Big(A_{1}\oplus A_{2}, \mat{p_{11}\\p_{21}&p_{22}} \Big) \ ;    
  \end{displaymath}
  this is a morphism in $\idem\lt(\cA_{1}, \cA_{2})$ as
  \begin{displaymath}
    \mat{p_{11}\\p_{21}&p_{22}} \cdot
    \mat{p_{11}\\p_{21}p_{11}&p_{22}} \cdot \mat{p_{11}\\&p_{22}} =
    \mat{p_{11}\\p_{21}p_{11} + p_{22}p_{21}p_{11}&p_{22}} = M \ ,
  \end{displaymath}
  using \eqref{eq:p11}, \eqref{eq:p22} and~\eqref{eq:0} again.

  Finally, we calculate
  \begin{displaymath}
    M \cdot \mat{\pi\\\rho} = \mat{p_{11}\\p_{21}p_{11}&p_{22}}
    \cdot \mat{p_{11}\\p_{22}p_{21}&p_{22}} = \mat{p_{11}\\p_{21}&p_{22}}
  \end{displaymath}
  (using~\eqref{eq:p21} for the (2,1)-entry of the last matrix) which
  is the identity map of $(A_{1} \oplus
  A_{2},\smm{p_{11}\\p_{21}&p_{22}})$. Similarly,
  \begin{displaymath}
    \mat{\pi\\\rho} \cdot M = \mat{p_{11}\\p_{22}p_{21}&p_{22}} \cdot
    \mat{p_{11}\\p_{21}p_{11}&p_{22}} = \mat{p_{11}\\&p_{22}}
  \end{displaymath}
  (using~\eqref{eq:0} for the (2,1)-entry of the last matrix) which is
  the identity map of $(A_{1} \oplus
  A_{2},\smm{p_{11}\\&p_{22}})$. This shows that $\smm{\pi\\\rho}$ is
  an isomorphism in $\idem\lt(\cA_{1},\cA_{2})$ with inverse~$M$, and
  finishes the proof of part~(1).

  It remains to verify naturality of the sequence~\eqref{eq:ses} with
  respect to morphisms in~$\idem\lt(\cA_{1},\cA_{2})$
    \begin{displaymath}
      \mat{f_{11}&\\ f_{21}&f_{22}} \colon \Big(A_{1} \oplus A_{2},
      \mat{p_{11}\\p_{21}&p_{22}} \Big)
      \rTo \Big(B_{1} \oplus B_{2}, \mat{q_{11}\\q_{21}&q_{22}} \Big) \ .
    \end{displaymath}
    Using the defining property of a morphism in the idempotent
    completion
    \begin{displaymath}
      \mat{f_{11}&\\ f_{21}&f_{22}} \cdot \mat{p_{11}\\p_{21}&p_{22}}
      = \mat{f_{11}&\\ f_{21}&f_{22}} = \mat{q_{11}\\q_{21}&q_{22}}
      \cdot \mat{f_{11}&\\ f_{21}&f_{22}} \ ,
    \end{displaymath}
    it is a routine calculation to verify commutativity of the diagram
    \begin{diagram}
            0 & \rTo & (A_{2}, p_{22}) &
            \rTo[l>=4em]^{\smm{0\\p_{22}}} & (A_{1} \oplus A_{2}, p) &
            \rTo[l>=4em]^{(p_{11}\ 0)} & (A_{1}, p_{11}) & \rTo & 0 \\
            && \dTo<{f_{22}} && \dTo<{\smm{f_{11}&\\ f_{21}&f_{22}}} &&
            \dTo<{f_{11}} \\
            0 & \rTo & (B_{2}, q_{22}) &
            \rTo^{\smm{0\\q_{22}}} & (B_{1} \oplus B_{2}, q) &
            \rTo^{(q_{11}\ 0)} & (B_{1}, q_{11}) & \rTo & 0 
    \end{diagram}
    which finishes the proof of part~(2).
\end{proof}

\begin{remark}
  The splitting map~$\rho$ from Lemma~\ref{lem:ses_functors}~(1) is
  {\it not\/} natural in~$A$.
\end{remark}

\section{Algebraic $K$-theory}
\label{sec:K}

\begin{lemma}
  \label{lem:K_split}
  There is an isomorphism of \textsc{Quillen} $K$-groups
  \begin{displaymath}
    (Q_{*}, S_{*}) \colon K_{n} \big(
    \idem\lt(\cA_{1},\cA_{2}) \big) \rTo^{\iso} K_{n} (\idem\cA_{1})
    \oplus K_{n} (\idem \cA_{2}) \ ,
  \end{displaymath}
  given by sending $A = \big( A_{1} \oplus A_{2},
  \smm{p_{11}\\p_{21}&p_{22}} \big)$ to $Q(A) = (A_{1},p_{11})$
  and $S(A) = (A_{2}, p_{22})$. The inverse
  $\epsilon_{1*}+\epsilon_{2*}$ is induced by the functor
  \begin{align*}
    \epsilon_{1} + \epsilon_{2} \colon \idem \cA_{1} \times \idem
    \cA_{2} & \rTo \idem \lt(\cA_{1},\cA_{2}) \ , \\
    \big( (A_{1}, p_{11}),\, (A_{2}, p_{22}) \big) & \mapsto \Big(
    A_{1} \oplus A_{2},\, \smm{p_{11}\\&p_{22}} \Big) \ .
  \end{align*}
\end{lemma}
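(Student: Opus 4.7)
The plan is to invoke \textsc{Quillen}'s additivity theorem for the short exact sequence of functors furnished by Lemma~\ref{lem:ses_functors}. The key move is to read that sequence, which a priori lands in~$\idem\cA$, as a short exact sequence of \emph{endofunctors} of $\idem\lt(\cA_{1},\cA_{2})$ via the embeddings~\eqref{eq:epsilon1} and~\eqref{eq:epsilon2}. That is, I would consider
\begin{displaymath}
0 \rTo \epsilon_{2} S \rTo \id_{\idem\lt(\cA_{1},\cA_{2})} \rTo \epsilon_{1} Q \rTo 0 \ .
\end{displaymath}
By Lemma~\ref{lem:ses_functors}(1) this sequence is pointwise split exact, by Lemma~\ref{lem:ses_functors}(2) the maps $\sigma$ and $\pi$ are natural (it is irrelevant that the splitting~$\rho$ itself is not natural), and all three functors are exact by Lemmas~\ref{lem:additive} and~\ref{lemma:additive_functors} combined with the fact that we work with the split exact structure throughout. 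The additivity theorem then yields the identity $\id_{*} = \epsilon_{1*} Q_{*} + \epsilon_{2*} S_{*}$ of endomorphisms of $K_{n} \big( \idem\lt(\cA_{1},\cA_{2}) \big)$, which is exactly $(\epsilon_{1*} + \epsilon_{2*}) \circ (Q_{*}, S_{*}) = \id$.

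For the opposite composition I would compute the four composites $Q \epsilon_{i}$ and $S \epsilon_{j}$ directly from the definitions. One reads off immediately that $Q \epsilon_{1} = \id_{\idem\cA_{1}}$ and $S \epsilon_{2} = \id_{\idem\cA_{2}}$, whereas $Q \epsilon_{2}$ and $S \epsilon_{1}$ are both the zero functor. Passing to $K$-theory, this gives $(Q_{*}, S_{*}) \circ (\epsilon_{1*} + \epsilon_{2*}) = \id$ on $K_{n}(\idem\cA_{1}) \oplus K_{n}(\idem\cA_{2})$, finishing the argument.

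The substantive step is the first one: correctly repackaging Lemma~\ref{lem:ses_functors} as a short exact sequence of exact endofunctors with common source and target $\idem\lt(\cA_{1},\cA_{2})$, so that additivity applies uniformly. Once this is in place no real obstacle remains, and everything else is bookkeeping on the matrix of induced maps $K_{n}(\idem\cA_{i}) \to K_{n}(\idem\cA_{j})$.
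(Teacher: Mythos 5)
Your proposal is correct and follows essentially the same route as the paper: both repackage the sequence of Lemma~\ref{lem:ses_functors} as a short exact sequence of exact endofunctors $0 \rTo \epsilon_{2}S \rTo \id \rTo \epsilon_{1}Q \rTo 0$, apply \textsc{Quillen}'s additivity theorem to get $(\epsilon_{1*}+\epsilon_{2*})\circ(Q_{*},S_{*})=\id$, and then note that the opposite composition is the identity directly from the definitions of $Q$, $S$, $\epsilon_{1}$, $\epsilon_{2}$. You spell out the four composites $Q\epsilon_{i}$, $S\epsilon_{j}$ a little more explicitly than the paper does, but that is a matter of presentation, not substance.
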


\begin{proof}
  Using the embeddings of categories \eqref{eq:epsilon1}
  and~\eqref{eq:epsilon2}, we can re-phrase the conclusion of
  Lemma~\ref{lem:ses_functors}: There is a short exact sequence of
  endo-functors of $\idem\lt(\cA_{1}, \cA_{2})$ and natural
  transformations
  \begin{displaymath}
    0 \rTo \epsilon_{2}S \rTo \id \rTo \epsilon_{1}Q \rTo 0 \ .
  \end{displaymath}
  By \textsc{Quillen}'s additivity theorem \cite[Corollary~1,
  p.~106]{MR0338129} we have $(\epsilon_{2}S)_{*} +
  (\epsilon_{1}Q)_{*} = \id$, and this sum factors as
  \begin{multline*}
    K_{n} \big( \idem\lt(\cA_{1},\cA_{2}) \big) \rTo^{(Q_{*},S_{*})}
    K_{n} (\idem\cA_{1}) \oplus K_{n} (\idem \cA_{2}) \\
    \rTo^{\epsilon_{1*}+\epsilon_{2*}}  K_{n} \big(
    \idem\lt(\cA_{1},\cA_{2}) \big) \ .
  \end{multline*}
  On the other hand $(Q_{*},S_{*}) \circ
  (\epsilon_{1*}+\epsilon_{2*})$ is the identity map of the group
  $K_{n} (\idem\cA_{1}) \oplus K_{n} (\idem \cA_{2})$. Hence
  $(Q_{*},S_{*})$ is an isomorphism with inverse
  $\epsilon_{1*}+\epsilon_{2*}$.
\end{proof}

\section{Generalisations}
\label{sec:generalisations}

Let $\cA$ be an additive category as before, and let $\cA_{q}$, $1
\leq q \leq r$, be a finite collection of full additive
subcategories. We define the lower triangular category $\lt(\cA_{q};
\, 1 \leq q \leq r)$ to be the category which has objects
\begin{displaymath}
  F = \bigoplus_{q=1}^{r} F_{q} \ , \quad F_{j} \in \cA_{j}
\end{displaymath}
(the direct sum decomposition being part of the data), and has
morphisms the lower triangular matrices
\begin{displaymath}
  f = \mat{f_{11}\\f_{21}&f_{22}\\ \vdots & \vdots & \ddots \\ f_{r1}
    & f_{r2} & \cdots & f_{rr}} \colon \bigoplus_{q=1}^{r} F_{q}  = F \rTo G
  = \bigoplus_{q=1}^{r} G_{q}
\end{displaymath}
with $f_{ij} \in \cA(F_{j}, G_{i})$. The category $\lt(\cA_{q}; \, 1
\leq q \leq r)$ is an additive category, and we have $\lt(\cA_{1},
\cA_{2}) = \lt(\cA_{q}; \, 1 \leq q \leq 2)$.

\begin{proposition}
  \label{prop:generalisations}
  Let $\cA$ be an additive category as before, and let $\cA_{q}$, $1
  \leq q \leq r$, be a finite collection of full additive
  subcategories. There are exact (=additive) functors
  \begin{gather*}
    T_{k} \colon \idem \lt(\cA_{q}; \, 1
    \leq q \leq r) \rTo \idem \cA_{k} \ , \\
    \Big( \bigoplus_{q=1}^{r} P_{q},\, \smm{p_{11}\\p_{21}&p_{22}\\ \vdots & \vdots & \ddots \\ p_{r1}
    & p_{r2} & \cdots & p_{rr}} \Big) \mapsto (P_{k}, p_{kk}) \ , \\
    \smm{f_{11}\\f_{21}&f_{22}\\ \vdots & \vdots & \ddots \\ f_{r1}
    & f_{r2} & \cdots & f_{rr}} \mapsto f_{kk} \ .
  \end{gather*}
  These functors induce an isomorphism on $K$-groups
  \begin{displaymath}
    (T_{1*},\, T_{2*},\, \cdots,\, T_{r*}) \colon K_{n} \big( \idem
    \lt(\cA_{q}; \, 1 \leq q \leq r) \big) \rTo \bigoplus_{k=1}^{r}
    K_{n} \big( \idem \cA_{k} \big) \ ;
  \end{displaymath}
  the inverse isomorphisms are induced by the functor
  \begin{align*}
    \prod_{q=1}^{r} \idem \cA_{q} & \rTo \idem \lt(\cA_{q}; \, 1 \leq
    q \leq r) \ , \\
    \big( (P_{q}, p_{qq}) \big)_{q=1}^{r} & \mapsto 
    \Big( \bigoplus_{q=1}^{r} P_{q},\, \smm{p_{11}\\&p_{22}\\&& \ddots
      \\&&& p_{rr}} \Big) \ .
  \end{align*}
\end{proposition}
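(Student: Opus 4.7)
The plan is induction on $r$. The cases $r = 1$ (trivial) and $r = 2$ (Lemma~\ref{lem:K_split}) provide the base, and the inductive step peels off the last diagonal block. An object $F = \bigoplus_{q=1}^r F_q$ decomposes as $F' \oplus F_r$ with $F' = \bigoplus_{q=1}^{r-1} F_q$, and an $r \times r$ lower triangular matrix splits into block form $\smm{f'\\ f_{r\ast} & f_{rr}}$ with $f'$ an $(r-1) \times (r-1)$ lower triangular matrix---that is, a morphism in $\cB := \lt(\cA_q; 1 \leq q \leq r-1)$. This identifies $\lt(\cA_q; 1 \leq q \leq r)$ with a lower triangular category built from $\cB$ and $\cA_r$.

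To apply Lemma~\ref{lem:K_split} in this situation, one first needs a mild generalisation of the setup of Sections~\ref{sec:lt_cat}--\ref{sec:K}, in which $\cA_1$ and $\cA_2$ are allowed to be arbitrary additive categories equipped with faithful additive forgetful functors into a common ambient additive category $\cA$; here $\cB$ is not literally a full subcategory of $\cA$ but comes with such a forgetful functor. I expect this generalisation to be entirely routine: all the arguments---the idempotent matrix calculus, the construction and naturality of the splitting $\rho$ in Lemma~\ref{lem:ses_functors}, and the invocation of \textsc{Quillen}'s additivity theorem in Lemma~\ref{lem:K_split}---use only the lower triangular matrix structure and never exploit that $\cA_1$ and $\cA_2$ are literal subcategories of $\cA$.

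Granting this, the generalised Lemma~\ref{lem:K_split} produces an isomorphism
\[
  K_n\bigl( \idem \lt(\cA_q; 1 \leq q \leq r) \bigr) \iso K_n(\idem \cB) \oplus K_n(\idem \cA_r),
\]
and the inductive hypothesis applied to $\cB$ splits the first summand further into $\bigoplus_{k=1}^{r-1} K_n(\idem \cA_k)$. Finally I would verify that the composed isomorphism agrees with $(T_{1\ast}, \ldots, T_{r\ast})$: for $k < r$ the functor $T_k$ factors as ``extract the top-left $(r-1) \times (r-1)$ block, then apply the $k$-th extraction functor for~$\cB$'', while $T_r$ is the ``bottom-right'' functor of the binary splitting; the inverse is matched analogously via the block-diagonal embedding. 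The main obstacle is the bookkeeping required to verify the generalisation of Sections~\ref{sec:lt_cat}--\ref{sec:K} to additive categories equipped with forgetful functors; once that is accepted, the induction is purely formal.
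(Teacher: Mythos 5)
Your induction by peeling off the last diagonal block is exactly the route the paper takes, via the identification $\lt(\cA_q;\,1\leq q\leq r) = \lt\bigl(\lt(\cA_q;\,1\leq q\leq r-1),\,\cA_r\bigr)$, and it is correct. You are right to flag that this identification is not literal under the definition of Section~\ref{sec:lt_cat}: the inner category $\cB = \lt(\cA_q;\,1\leq q\leq r-1)$ is not a \emph{full} subcategory of $\cA$ (it comes with only a faithful forgetful functor, and its hom-sets are proper, lower-triangular subsets of the corresponding $\cA$-hom-sets), so if one reads $\lt(\cB,\cA_r)$ with $f_{11}$ ranging over all of $\cA(F',G')$ one does not recover $\lt(\cA_q;\,1\leq q\leq r)$. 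The paper calls the identification ``obvious'' and leaves this unaddressed; your diagnosis---that Sections~\ref{sec:lt_cat}--\ref{sec:K} use only the matrix calculus and go through verbatim when $\cA_1,\cA_2$ are arbitrary additive categories equipped with faithful additive forgetful functors to $\cA$, with the $(1,1)$-entry required to lie in $\cA_1$---is exactly the right fix and is indeed routine to check. Note also that the paper records a second, induction-free route which sidesteps the generalisation entirely: regard the $T_k$ as the successive quotients of an admissible filtration of the identity functor on $\idem\lt(\cA_q;\,1\leq q\leq r)$ and apply Corollary~2, p.~107 of~\cite{MR0338129}; if you prefer not to restate the $\lt$-formalism in greater generality, that is the cleaner alternative.
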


\begin{proof}
  In view of the obvious identification
  \begin{displaymath}
    \lt(\cA_{q}; \, 1 \leq q \leq r) = \lt \big( \lt(\cA_{q}; \, 1
    \leq q \leq r-1) ,\, \cA_{r}\big) \ ,
  \end{displaymath}
  and the analogous equality for idempotent completions, this follows
  from a straightforward induction on~$r$. For $r=2$, the Proposition
  has been verified in Lemmas~\ref{lemma:additive_functors},
  \ref{lem:ses_functors} and~\ref{lem:K_split}; for $r \leq 1$ the
  Proposition is trivial. (Alternatively, consider the~$T_{k}$ as
  successive quotients of an admissible filtration of the identity
  functor, and apply Corollary~2, p.~107 of~\cite{MR0338129}.)
\end{proof}

\section{The application template}
\label{sec:template}

For the reader's convenience we end this part with a basic ``template''
for applying the abstract machinery. The template has to be adjusted
to the actual situation under consideration, as seen in our
applications later in the paper.

Given an additive category~$\cA$, the task is to compute $K_{n} (\idem
\cA)$. We proceed following these steps:
\begin{enumerate}[label={\rm (AT\arabic*)}]
\item \label{at:filter} Filter the category $\cA$ by full
  subcategories $\cA[S]$, where $S$ varies over the directed poset of
  non-empty finite subsets of a partially ordered set~$G$ or, more
  generally, over any upwards directed sub-poset of the power-set
  of~$G$ with union~$G$; ordering is by inclusion. (The poset
  structure of~$G$ is irrelevant at this stage.)  This yields a
  corresponding filtration $(\idem \cA)[S] = \idem (\cA[S])$ of $\idem
  \cA$.
\item \label{at:identify} For $S = \{s\}$ a one-element set, identify
  $\cA[s] = \cA[S]$ with some other interesting additive category
  $\cA'[s]$. This yields automatically an identification of $\idem
  \cA[s]$ with $\idem \cA'[s]$, by Lemma~\ref{lem:equivalent}.
\item \label{at:lt} For $S = \{s_{1},\, s_{2},\, \cdots,\, s_{r}\}$
  with $r \geq 2$ identify $\cA[S]$ with the category $\lt \big(
  \cA[\{s_{q}\}]; \, 1 \leq q \leq r \big)$. (Here we will make use of
  the order relation of~$G$ which will influence the indexing of the
  elements~$s_{j}$.)
\item \label{at:identify_K} From
  Proposition~\ref{prop:generalisations}, and from
  steps~\ref{at:identify} and~\ref{at:lt}, we obtain isomorphisms
  $\bigoplus_{s \in S} K_{n} \big( \idem \cA'[s] \big) \rTo^{\iso}
  K_{n} ( \idem \cA[S])$ which are natural in~$S$ with respect to set
  inclusion.
\item \label{at:final} As $\idem \cA = \bigcup_{S} \idem \cA[S]$ by
  step~\ref{at:filter}, and as $K$-theory commutes with filtered
  unions \cite[p.~104]{MR0338129}, we obtain the isomorphism
  \begin{displaymath}
    K_{n} (\idem \cA) \iso \bigoplus_{s \in G} K_{n} \big(\idem
    \cA'[s] \big) \ .
  \end{displaymath}
\end{enumerate}

If $\cA'[s] = \cA'$ does not depend on~$s$ we obtain an isomorphism
\begin{equation}
  \label{eq:template}
  K_{n} (\idem \cA) \iso \bigoplus_{s \in G} K_{n} (\idem \cA')
  \iso \bZ[G] \tensor_{\bZ} K_{n} (\idem \cA') \ ,
\end{equation}
where $\bZ[G]$ denotes the free \textsc{abel}ian group with basis~$G$.

\part{Systematic algebra}
\label{part:systematic_algebra}

\section{Systematic rings and modules}

Given a subset $B$ of a ring~$R$ and a subset $A$ of a right $R$-module,
we let $AB$ denote the set of finite sums of products $ab$ with $a \in
A$ and~$b \in B$. --- Let $G$ be a group, multiplicatively written. A
{\it (unital) $G$-systematic ring} is a unital ring $R$ together with
a family $(R_{g})_{g \in G}$ of additive subgroups of $(R,+)$ such
that
\begin{enumerate}[label={\rm (SR\arabic*)}]
\item $R = \sum_{g \in G} R_{g}$ (that is, the subgroups $R_{g}$
  generate~$R$ as an \textsc{abel}ian group),
\item $R_{g}R_{h} \subseteq R_{gh}$ for all $g,h \in G$,
\item $1 \in R_{1}$.
\end{enumerate}
The first two conditions define what is called a $G$-system by
\textsc{Grzeszczuk} \cite{MR806068}. The last condition $1 \in R_{1}$
is redundant for finite~$G$, see \cite[Theorem~1]{MR806068}. --- The
unital $G$-systematic rings are precisely the homomorphic images of
unital $G$-graded rings. Indeed, if $\pi \colon R' \rTo R$ is a
surjective ring homomorphism with $R'$ a $G$-graded ring, then setting
$R_{g} = \pi(R'_{g})$ makes $R$ into a $G$-systematic
ring. Conversely, if $R$ is $G$-systematic define $R'_{g} = \{g\}
\times R_{g}$ and $R' = \bigoplus_{g \in G} R'_{g}$; this is a
$G$-graded ring with multiplication determined by $(g,a) \cdot (h,b) =
(gh, ab)$, and the obvious map $\pi \colon (g,a) \mapsto a$ is a
surjective ring homomorphism. Note that $\ker \pi$ need not be a
graded ideal.

A filtered ring~$R$ equipped with an increasing or decreasing
filtration $(F^{k}R)_{k \in \bZ}$ can be considered as a
$\bZ$-systematic ring by setting $R_{k} = F^{k} R$, provided that $R =
\bigcup_{k} F^{k}R$ and $1 \in F^{0}R$.

\medbreak

Given a $G$-systematic ring~$R$, a {\it $G$-systematic $R$-module} is
a unital right $R$\nbd-module $M$ together with a family $(M_{g})_{g
  \in G}$ of additive subgroups of $(M,+)$ such that
\begin{enumerate}[label={\rm (SM\arabic*)}]
\item $M = \sum_{g \in G} M_{g}$ (that is, the subgroups $M_{g}$
  generate~$M$ as an \textsc{abel}ian group),
\item $M_{g}R_{h} \subseteq M_{gh}$ for all $g,h \in G$.
\end{enumerate}
A homomorphism $f \colon M \rTo N$ of $G$-systematic modules is an
$R$-linear map such that $f (M_{g}) \subseteq N_{g}$ for all $g \in
G$. Direct sums are given by the prescription
\begin{displaymath}
  \Big( \sum_{g \in G} M_{g} \Big) \oplus \Big( \sum_{g \in G} N_{g}
  \Big) = \sum_{g \in G} \big( M_{g} \oplus N_{g} \big) \ .
\end{displaymath}
This defines the additive category $\syst G R$ of $G$-systematic
$R$-modules. --- Every $R$-module $M$ can be considered as a
$G$-systematic $R$-module when equipped with the trivial systematic
structure $M_{g} = M$. This defines a functor from the category of
$R$-modules to the category of systematic $R$\nbd-modules which is
right adjoint to the functor which forgets the systematic structure.

For $M$ a $G$-systematic module and $a$~an element of~$G$ we use the
symbol $\shift a M$ to denote the $a$-shift of~$M$; this is the
$G$-systematic module which is $M$ as an $R$-module, with systematic
structure given by $\shift a M_{g} = M_{a\inv g}$. Clearly $\shift b
{\big( \shift a M \big)} = \shift {ba} M$ so that shifting defines a
left $G$\nbd-action on the category $\syst G R$.

\section{Systematically free and projective modules}

Let $R$ be a $G$-systematic ring. We are interested in the category
$\cP_G$ of finitely generated systematically projective $R$-modules,
which are direct summands of direct sums of modules of the
form~$\shift g R$. In other words, $\cP_G$ is the idempotent
completion of the additive category~$\cF_G$ of finitely generated
systematically free modules, which has objects all finite direct sums
with summands of the form~$\shift g R$. We will in fact work with {\it
  systematically free based modules\/} throughout, that is, free
modules equipped with a choice of preferred basis elements. (Morphisms
are not required to respect basis elements.)  The preferred generator
of~$\shift g R$ is the unit element $1 \in \shift g R_{g}$.

Given a set $S \subseteq G$ we let $\cF_G[S]$ denote the full additive
subcategory of $\syst G R$ with objects the $G$-systematically free
based modules of the form
\begin{displaymath}
  \bigoplus_{s \in S} (\shift s R)^{m_{s}}
\end{displaymath}
for integers $m_{s} \geq 0$, of which only finitely many are allowed
to be non-zero.  This is the additive category of {\it finitely
  generated systematically free based modules with generators having
  degrees in~$S$}. We denote the idempotent completion $\idem\cF_G[S]$
by $\cP_G[S]$, and call $\cP_G[S]$ the category of {\it finitely
  generated systematically projective modules with generators having
  degrees in~$S$}.

Given sets $S \subseteq T \subset G$ we have inclusions of full
subcategories $\cF_G[S] \subseteq \cF_G[T]$ and $\cP_G[S] \subseteq
\cP_G[T]$, resulting in systems of additive categories indexed by the
power set of~$G$ (ordered by inclusion). In view of
Lemma~\ref{lem:filtration} we observe
\begin{equation}
  \label{eq:syst_filtered}
  \cF_G = \bigcup_{S} \cF_G[S] \qquad \text{and} \qquad \cP_G =
  \idem \cF_G = \bigcup_{S} \cP_G[S]
\end{equation}
whenever we let $S$ vary over the full power set of~$G$, or some
upwards directed sub-poset covering all of~$G$.

\section{Strongly systematic rings and modules}

\begin{definition}
  We call a $G$-systematic ring~$K$ (\resp, a $G$\nbd-systematic
  $K$\nbd-module~$M$) {\it strongly systematic\/} if for all $g_{1},
  g_{2} \in G$ we have an equality $K_{g_{1}} K_{g_{2}} =
  K_{g_{1}g_{2}}$ (\resp, $M_{g_{1}} K_{g_{2}} = M_{g_{1}g_{2}}$).
\end{definition}

Strongly systematic rings are exactly the homomorphic images of
strongly graded rings, and coincide with ``\textsc{Clifford} systems''
in the sense of \textsc{Dade} \cite{MR0262384}, and with ``almost
strongly graded rings'' satisfying $1 \in K_{1}$ in the terminology of
\textsc{N{\u{a}}st{\u{a}}sescu} and \textsc{van Oystaeyen}
\cite[\S{}I.8]{MR676974}. --- Clearly a strongly $G$-systematic
ring~$K$ satisfies $1 \in K_{g\inv} K_{g}$ for all $g \in
G$. Conversely, suppose that $K$ is a $G$-systematic ring with $1 \in
K_{h\inv} K_{h}$ for all $h \in G$. Then
\begin{displaymath}
  K_{g}K_{h} \subseteq K_{gh} \subseteq K_{gh} (K_{h\inv} K_{h})
  \subseteq K_{g} K_{h}
\end{displaymath}
so that $K_{g} K_{h} = K_{gh}$ for all $g,h \in G$: The ring $K$~is
strongly systematic.

\begin{lemma}
  \label{lem:systematic_Dade}
  Let $Q$ be a group, let $K = \sum_{g \in Q} K_{g}$ be a
  $Q$-systematic ring, and let $M = \sum_{g \in Q} M_{g}$ be a
  $Q$-systematic $K$-module. If $K$ is strongly systematic then so
  is~$M$.
\end{lemma}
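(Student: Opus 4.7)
The plan is to verify the two inclusions making up the required equality $M_{g_1}K_{g_2} = M_{g_1 g_2}$. One direction, $M_{g_1}K_{g_2} \subseteq M_{g_1 g_2}$, is immediate from axiom~(SM2) for the systematic module~$M$ and does not use the hypothesis on~$K$.

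For the reverse inclusion I would exploit the consequence of strong systematicity recorded just before the Lemma: since $K_{g^{-1}}K_{g} = K_{1}$ and $1 \in K_{1}$, one has $1 \in K_{g^{-1}}K_{g}$ for every $g \in Q$. Applying this to $g = g_2$, write
\begin{displaymath}
  1 = \sum_{i} a_{i} b_{i} \qquad \text{with } a_{i} \in K_{g_{2}^{-1}} \text{ and } b_{i} \in K_{g_{2}} \ .
\end{displaymath}
Then for any $m \in M_{g_{1}g_{2}}$ we have $m = m \cdot 1 = \sum_{i} (m a_{i}) b_{i}$, and by~(SM2) applied to~$M$,
\begin{displaymath}
  m a_{i} \in M_{g_{1}g_{2}} K_{g_{2}^{-1}} \subseteq M_{g_{1}g_{2} \cdot g_{2}^{-1}} = M_{g_{1}} \ .
\end{displaymath}
Hence $m \in M_{g_{1}}K_{g_{2}}$, which establishes $M_{g_{1}g_{2}} \subseteq M_{g_{1}}K_{g_{2}}$ and completes the proof.

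There is no real obstacle here; the only point requiring care is to apply the identity $1 \in K_{g^{-1}}K_{g}$ with the correct choice of $g$ (namely $g = g_{2}$, so that the factor of $K_{g_{2}^{-1}}$ appears on the right and can be absorbed into $M_{g_{1}g_{2}}$ via the module action, leaving a factor in $K_{g_{2}}$).
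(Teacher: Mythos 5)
Your proof is correct and follows essentially the same route as the paper's: both rely on the observation $1 \in K_{g^{-1}}K_{g}$ (from strong systematicity and $1 \in K_{1}$) and then absorb the $K_{g_2^{-1}}$-factor into $M_{g_1g_2}$ via (SM2). The paper phrases this as a single chain of inclusions $M_{gh} = M_{gh}K_{1} = M_{gh}K_{h^{-1}}K_{h} \subseteq M_{g}K_{h}$, whereas you unwind it elementwise, but the argument is identical.
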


\begin{proof}
  For strongly systematic $K$, since $1 \in K_{1}$ we have
  \begin{displaymath}
    M_{g}K_{h} \subseteq M_{gh} = M_{gh} K_{1} = M_{gh} K_{h\inv}
    K_{h} \subseteq M_{g} K_{h}
  \end{displaymath}
  for all $g,h \in Q$. That is, we have $M_{g} K_{h} = M_{gh}$ as
  required.
\end{proof}

\begin{lemma}[{cf.~\textsc{N{\u{a}}st{\u{a}}sescu} and \textsc{van
      Oystaeyen} \cite[I.8.2]{MR676974}}]
  \label{lem:K_a-fgp}
  Let $K$ be a $Q$-systematic ring, and let $a \in Q$ be such that $1
  \in K_{a}K_{a\inv}$. Then the right $K_{1}$-module $K_{a}$ is
  finitely generated projective.
\end{lemma}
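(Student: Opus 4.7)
The plan is to exhibit $K_a$ as a direct summand of a finitely generated free right $K_1$-module, using a dual-basis style argument powered by the hypothesis $1 \in K_a K_{a^{-1}}$.

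First I would unpack the hypothesis: by definition of the notation $AB$ from the section on systematic rings, $1 \in K_a K_{a^{-1}}$ means there exist finitely many elements $x_1,\dots,x_n \in K_a$ and $y_1,\dots,y_n \in K_{a^{-1}}$ with $1 = \sum_{i=1}^n x_i y_i$. The key observation is that for any $z \in K_a$, the product $y_i z$ lies in $K_{a^{-1}} K_a \subseteq K_{a^{-1} a} = K_1$ by axiom (SR2), so it is a legitimate element of the base ring $K_1$.

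Next I would define two right $K_1$-linear maps
\begin{displaymath}
\phi \colon K_a \rTo (K_1)^n \ , \quad z \mapsto (y_1 z,\, y_2 z,\, \dots,\, y_n z)
\end{displaymath}
and
\begin{displaymath}
\psi \colon (K_1)^n \rTo K_a \ , \quad (r_1, \dots, r_n) \mapsto \sum_{i=1}^n x_i r_i \ .
\end{displaymath}
The map $\psi$ lands in $K_a$ because $x_i \in K_a$ and $r_i \in K_1$, so $x_i r_i \in K_a K_1 \subseteq K_a$. Right $K_1$-linearity of both maps is immediate from associativity in~$K$. The composite $\psi \circ \phi$ sends $z \in K_a$ to $\sum_i x_i (y_i z) = \bigl(\sum_i x_i y_i\bigr) z = 1 \cdot z = z$, so $\psi \circ \phi = \id_{K_a}$.

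Therefore $K_a$ is a retract, hence a direct summand, of the finitely generated free right $K_1$-module $(K_1)^n$, which is precisely the statement that $K_a$ is finitely generated projective. There is no real obstacle here; the only mildly subtle point is confirming that $y_i z \in K_1$ (not merely in~$K$), which is where axiom (SR2) enters crucially, and that both maps are well-defined with the correct codomain and source.
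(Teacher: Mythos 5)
Your proposal is correct and is essentially the same argument as the paper's: the paper writes $1 = \sum_j \alpha_j\beta_j$, defines $\rho_j(r) = \beta_j r$, and invokes the dual-basis criterion from Bourbaki, whereas you package the $\rho_j$ into the single map $\phi$ and the $\alpha_j$ into $\psi$ to exhibit $K_a$ explicitly as a retract of $(K_1)^n$; this is just the dual-basis lemma unwound. The care you take in checking $y_i z \in K_1$ via (SR2) matches the paper's (implicit) use of the same fact.
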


\begin{proof}
  By hypothesis there is a finite sum decomposition $1 = \sum_{j}
  \alpha_{j} \beta_{j}$ with $\alpha_{j} \in K_{a}$ and $\beta_{j}
  \in K_{a\inv}$. Define $K_{1}$-linear maps
  \begin{displaymath}
    \rho_{j} \colon K_{a} \rTo K_{1} \ , \quad r \mapsto \beta_{j} r \ ,
  \end{displaymath}
  and observe that $\sum_{j}
  \alpha_{j} \rho_{j} = \id_{K_{a}}$ as
  \begin{displaymath}
    \sum_{j} \alpha_{j} \rho_{j} (r)= \sum_{j} \alpha_{j} \beta_{j} r
    = 1 \cdot r = r
  \end{displaymath}
  so that the collection of the~$\alpha_{j}$ and~$\rho_{j}$ form a
  dual basis of~$K_{a}$. Consequently $K_{a}$ is finitely generated
  projective as a right $K_{1}$-module, see \textsc{Bourbaki}
  \cite[\S{}II.2.6, Proposition~12]{MR1727844}.
\end{proof}

Let $K$ be a $Q$-systematic ring, and let $L$ be a right
$K_{1}$-module. We consider the tensor product $L \tensor_{K_{1}} K$
as a $Q$-systematic $K$-module with $(L \tensor_{K_{1}} K)_{q}$ the
subgroup generated by the primitive tensors $\ell \tensor k$ with
$\ell \in L$ and $k \in K_{q}$. In other words, $(L \tensor_{K_{1}}
K)_{q}$ is the image of map $\omega \colon L \tensor_{K_{1}} K_{q}
\rTo L \tensor_{K_{1}} K$. The map~$\omega$ is injective in case
$L$~is a projective (or, more generally, flat) $K_{1}$-module, in
which case we will tacitly identify $L \tensor_{K_{1}} K_{q}$ with $(L
\tensor_{K_{1}} K)_{q}$.

\begin{lemma}
  \label{lem:shift_by_tensor}
  Let $K$ be a $Q$-systematic ring, and let $a \in Q$ be such that $1
  \in K_{a\inv}K_{a}$. The natural map
  \begin{displaymath}
    \nu_{\shift a K} \colon K_{a\inv} \tensor_{K_{1}} K \rTo \shift a K \ , s
    \tensor r  \mapsto sr
  \end{displaymath}
  is an isomorphism of $Q$-systematic $K$-modules.
\end{lemma}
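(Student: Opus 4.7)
The plan is to construct an explicit two-sided inverse of $\nu = \nu_{\shift a K}$ in the category $\syst Q K$. To start, I would observe that $\nu$ is manifestly right $K$\nbd-linear (by associativity of multiplication) and respects the systematic structures since, by axiom~(SR2),
\begin{displaymath}
  \nu \bigl( (K_{a\inv} \tensor_{K_{1}} K)_{q} \bigr) \subseteq K_{a\inv} K_{q} \subseteq K_{a\inv q} = (\shift a K)_{q}
\end{displaymath}
for every $q \in Q$; so $\nu$ is already a well-defined morphism in $\syst Q K$.

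The hypothesis $1 \in K_{a\inv} K_{a}$ furnishes a finite expression $1 = \sum_{j} \alpha_{j} \beta_{j}$ with $\alpha_{j} \in K_{a\inv}$ and $\beta_{j} \in K_{a}$, and I would use this to define a candidate inverse
\begin{displaymath}
  \mu \colon \shift a K \rTo K_{a\inv} \tensor_{K_{1}} K \ , \quad r \mapsto \sum_{j} \alpha_{j} \tensor \beta_{j} r \ .
\end{displaymath}
A short inspection shows that $\mu$ is right $K$\nbd-linear and also respects the systematic structure: if $r \in (\shift a K)_{q} = K_{a\inv q}$ then $\beta_{j} r \in K_{a} K_{a\inv q} \subseteq K_{q}$, so each summand $\alpha_{j} \tensor \beta_{j} r$ lies in $(K_{a\inv} \tensor_{K_{1}} K)_{q}$.

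It remains to check that $\mu$ inverts $\nu$. The identity $\nu \circ \mu = \id$ is immediate from $\sum_{j} \alpha_{j} \beta_{j} = 1$. The opposite composite $\mu \circ \nu = \id$ is the step I would flag as the main (mild) obstacle: on a pure tensor $s \tensor r$ with $s \in K_{a\inv}$ one must first use $\beta_{j} s \in K_{a} K_{a\inv} \subseteq K_{1}$ to slide this factor across the $K_{1}$\nbd-balanced tensor, after which the relation $\sum_{j} \alpha_{j} \beta_{j} = 1$ reappears and collapses the sum back to $s \tensor r$. Bilinearity then propagates the identity to all of $K_{a\inv} \tensor_{K_{1}} K$, yielding $\mu = \nu\inv$. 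Note that, although the hypothesis only supplies $1 \in K_{a\inv} K_{a}$, the proof silently exploits the other inclusion $K_{a} K_{a\inv} \subseteq K_{1}$, which is automatic from axiom~(SR2).
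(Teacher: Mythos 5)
Your proof is correct and follows essentially the same route as the paper: your map $\mu$ is the paper's $\tau$, and the key step of sliding $\beta_j s \in K_a K_{a^{-1}} \subseteq K_1$ across the tensor to establish $\mu \circ \nu = \id$ is exactly the computation the paper performs.
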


\begin{proof}
  By hypothesis there is a finite sum decomposition $1 = \sum_{j}
  \alpha_{j} \beta_{j}$ with $\alpha_{j} \in K_{a\inv}$ and $\beta_{j}
  \in K_{a}$. Define
  \begin{displaymath}
    \tau \colon \shift a K \rTo K_{a\inv} \tensor_{K_{1}} K \ , x
    \mapsto \sum_{j} \alpha_{j} \tensor (\beta_{j} x) \ .
  \end{displaymath}
  This map is clearly $K$-linear, and it is systematic as for $x \in
  \shift a K _{g} = K_{a\inv g}$ we have $\beta_{j} x \in
  K_{a}K_{a\inv g} \subseteq K_{g}$. Now $\nu_{\shift a K} \circ \tau (x) =
  \sum_{j} \alpha_{j} \beta_{j} x = x$, and
  \begin{displaymath}
    \tau \circ \nu_{\shift a K} (s \tensor r) = \tau (sr) = \sum_{j} \alpha_{j}
    \tensor (\beta_{j} sr) \ .
  \end{displaymath}
  But $s \in K_{a\inv}$ so that $\beta_{j} s \in K_{1}$; consequently,
  $\alpha_{j} \tensor (\beta_{j} sr) = (\alpha_{j} \beta_{j} s)
  \tensor r$, with $\alpha_{j} \beta_{j} \in K_{1}$, so that
  \begin{displaymath}
    \tau \circ \nu_{\shift a K} (s \tensor r) = \sum_{j} \alpha_{j} \beta_{j}
    \cdot s \tensor r = s \tensor r \ .
  \end{displaymath}
  We have shown that both compositions $\tau \circ \nu_{\shift a K}$ and
  $\nu_{\shift a K} \circ \tau$ are identity maps, as required.
\end{proof}

\begin{proposition}
  \label{prop:strong_systematic_equivalence}
  Let $Q$ be a group, and let $K$ be a strongly $Q$-systematic ring.
  Then the category $\cP_{Q}$ of finitely generated $Q$-systematically
  projective $K$-modules is equivalent to the category of finitely
  generated projective $K_{1}$-modules via the functor~$\rho$ that
  sends a module $M = \sum_{q \in Q} M_{q}$ to its
  component~$M_{1}$. The inverse equivalence is given by the functor
  $\tau$ sending $L$ to the $Q$-systematic module $L \tensor_{K_{1}} K
  = \sum_{q \in Q} L \tensor_{K_{1}} K_{q}$.
\end{proposition}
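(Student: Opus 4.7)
The plan is to verify that $\rho$ and $\tau$ are well defined on the stated categories, then to exhibit natural isomorphisms $\rho\tau \cong \id$ and $\tau\rho \cong \id$. The strong systematicity of~$K$ enters through the identities $K_a K_{a\inv} = K_1 = K_{a\inv} K_a$ for all $a \in Q$ (the characterisation recorded just before Lemma~\ref{lem:systematic_Dade}), which are precisely the hypotheses needed to apply Lemmas~\ref{lem:K_a-fgp} and~\ref{lem:shift_by_tensor} to every~$a$.

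First I would check that $\rho$ takes values in finitely generated projective $K_1$\nbd-modules. An object of~$\cP_Q$ may be presented as $(F, p)$ with $F = \bigoplus_i \shift{g_i} K$ systematically free and $p$ a systematic idempotent; since~$p$ preserves each degree component, it restricts to an idempotent $p_1$ of $F_1 = \bigoplus_i K_{g_i\inv}$. Applying Lemma~\ref{lem:K_a-fgp} with $a = g_i\inv$ (valid since $1 \in K_{g_i\inv} K_{g_i}$) shows each $K_{g_i\inv}$, hence~$F_1$, is finitely generated projective over~$K_1$; so therefore is $\rho(F, p) = (F_1, p_1)$. The functor~$\tau$ is well defined because, writing $L = (K_1^n, q)$, the object $\tau(L) = (K^n, q \tensor_{K_1} \id_K)$ lies in $\cP_Q$, as $K^n = (\shift 1 K)^n$ is systematically free.

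The identification $\rho\tau(L) = (L \tensor_{K_1} K)_1 = L \tensor_{K_1} K_1 \iso L$ supplies a natural isomorphism $\rho\tau \cong \id$. The substantive content lies in the natural isomorphism $\tau\rho \cong \id$: I would define $\nu \colon \tau\rho \rTo^\natdot \id$ componentwise by $\nu_M(m \tensor k) = mk$, a $K$\nbd-linear map which sends $M_1 \tensor K_q$ into~$M_q$ and whose naturality in~$M$ is immediate from the $K$\nbd-linearity of systematic morphisms. On the generating objects $M = \shift a K$ the map~$\nu_M$ is precisely the one of Lemma~\ref{lem:shift_by_tensor}, an isomorphism by strong systematicity. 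Finite direct sums extend this to every $F \in \cF_Q$, and the formalism of Section~\ref{sec:idem} (in particular Lemma~\ref{lem:equivalent} and its preceding construction) then promotes~$\nu$ to a natural isomorphism between the induced endofunctors on $\idem\cF_Q = \cP_Q$.

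The trickiest point I anticipate is not any single computation but the bookkeeping needed to interpret $\rho$ and~$\tau$ on objects of an idempotent completion and to confirm that the induced natural transformation $\hat\nu$ remains an isomorphism componentwise there. This is, however, a direct instance of the general mechanism of Section~\ref{sec:idem}, so no new input is required beyond the verifications above together with Lemmas~\ref{lem:K_a-fgp} and~\ref{lem:shift_by_tensor}.
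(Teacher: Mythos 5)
Your argument mirrors the paper's proof step by step: the same appeal to Lemma~\ref{lem:K_a-fgp} for well-definedness of $\rho$, the same immediate isomorphism $\rho\tau \iso \id$, the same natural transformation $\nu_M(m\tensor k)=mk$ shown to be an isomorphism on $\shift a K$ via Lemma~\ref{lem:shift_by_tensor}, and the same extension to direct sums and retracts. One small caveat: Lemma~\ref{lem:equivalent} does not apply verbatim, since $\tau\rho$ is not literally $\hat\Phi$ for an endofunctor $\Phi$ of $\cF_Q$ (its value on $\shift a K$ is $K_{a\inv}\tensor_{K_1}K$, only \emph{isomorphic} to an object of $\cF_Q$); what you in fact use is the elementary closing retract argument, which is precisely how the paper finishes.
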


\begin{proof}
  As $K$ is strongly systematic, each of its components $K_{a}$ is a
  finitely generated projective $K_{1}$-module by
  Lemma~\ref{lem:K_a-fgp}. It follows that $\rho \colon M \mapsto
  M_{1}$ maps finitely generated systematically projective $K$-modules
  to finitely generated projective $K_{1}$-modules.

  If $L$ is a finitely generated free $K_{1}$-module then $L
  \tensor_{K_{1}} K$ is (isomorphic to) a finite direct sum of copies
  of~$K$, that is, $L \tensor_{K_{1}} K$ is a finitely generated
  systematically free module. It follows that $\tau \colon L \mapsto
  L \tensor_{K_{1}} K$ maps finitely generated projective
  $K_{1}$-modules to finitely generated systematically projective
  $K$-modules.

  For a projective $K_{1}$-module $L$ we have $(L \tensor_{K_{1}}
  K)_{1} = L \tensor_{K_{1}} K_{1} \iso L$ (natural in~$L$) so that
  $\rho \circ \tau \iso \id$.

  Define a natural transformation $\nu \colon \tau \circ \rho \rTo^\natdot
  \id$ by
  \begin{displaymath}
    \nu_{M} \colon M_{1} \tensor_{K_{1}} K \rTo M \ , \quad (m,k)
    \mapsto mk \ .
  \end{displaymath}
  This is a map of systematic modules: the image of $(M_{1}
  \tensor_{K_{1}} K)_{q} = M_{1} \tensor_{K_{1}} K_{q}$ is contained
  in $M_{q}$. As $K$ is strongly systematic we have $1 \in K_{1} =
  K_{a\inv} K_{a}$ for every $a \in Q$. Hence for $M = \shift a K$ a
  systematically free module on one generator the map $\nu_{M} =
  \nu_{\shift a K}$ is an isomorphism by
  Lemma~\ref{lem:shift_by_tensor}. By allowing direct sums we see that
  $\nu_{M}$ is an isomorphism for (finitely generated) systematically
  free modules; further allowing retracts of free modules yields the
  statement of the proposition.
\end{proof}

\begin{remark}
  \begin{enumerate}
  \item The proof shows that we also obtain an equivalence between the
    category of $Q$-systematically projective $K$-modules and the
    category of projective $K_{1}$-modules (without any finite
    generation hypothesis). However, the theorem does not extend to
    all modules, nor to all finitely generated modules. For example,
    consider $K = \bZ[1/2]$ as a $\bZ$-systematic ring with systematic
    structure defined by $K_{k} = \{ 2^{k} \cdot x \,|\, x \in \bZ\}$
    so that $K_{0} = \bZ$. Then $L = \bZ/2$ is a non-trivial
    $K_{0}$-module, but $\tau(L) = L \tensor_{K_{0}} K =(\bZ/2)
    \tensor_{\bZ} \bZ[1/2] = 0$ so that $\rho \circ \tau(L) = (L
    \tensor_{K_{0}} K)_{0} \not\iso L$.
  \item The functor $\rho$ is additive and preserves injections, but
    not surjections. For example, with $K = \bZ[1/2]$ as above,
    multiplication by~$2$ is a surjective (in fact, bijective)
    self-map of~$K$; application of~$\rho$ results in the self-map
    of~$\bZ$ given by multiplication by~$2$, which is not onto.
  \end{enumerate}
\end{remark}

\part{Algebraic $K$-theory}
\label{part:systematic_K}

\section{Systematic $K$-theory}

\subsection*{Systematic $K$-theory}

The {\it $G$-systematic $K$-theory of~$R$} is the algebraic $K$-theory
of the additive category $\cP_{G}$ of finitely generated
$G$-systematically projective $R$-modules with respect to the split
exact structure. We write $K_{n}^{\Sy G} (R) = K_{n} (\cP_{G})$. As
$G$ acts on the left of the category $\cP_{G}$ by shifting, the groups
$K_{n}^{\Sy G} (R)$ acquire a left $\bZ[G]$-module structure described
by
\begin{displaymath}
  (g, P) \mapsto \shift g P \qquad \text{for \(g \in G\)} \ .
\end{displaymath}

\section{Systematic $K$-theory of rings with positive support}

Suppose now we are given an extension of multiplicative groups
\begin{equation}
\label{eq:ext}
  1 \rTo N \rTo G \rTo H \rTo 1 \ .
\end{equation}
Our goal is to analyse the $G$-systematic $K$-theory of a
$G$-systematic ring~$R$ in terms of the $N$-systematic or
$H$-systematic $K$-theory of certain subrings of~$R$, under the
assumption that $R$ has ``positive support'' in a suitable sense. This
requires, among other things, to have a partial order on either $N$
or~$H$.

\subsection*{Ordering the subgroup}

Suppose that the extension~\eqref{eq:ext} is split so that $G = N
\rtimes H$ is a semi-direct product with respect to some group
homomorphism $\theta \colon H \rTo \mathrm{Aut} (N)$. We simply write
$hn$ for $\theta(h)(n)$, so that the group multiplication takes the
form $(n,h)(n',h') = (n \cdot hn', hh')$, and inverses are computed as
$(n,h)\inv = (h\inv n\inv, h\inv)$. Given a $G$-systematic module $M$
we define
\begin{displaymath}
  M^{H} = \sum_{h \in H} M_{(1,h)} \ .
\end{displaymath}
Applied to $M=R$ this provides us with the $H$-systematic ring
$R^{H}\!$, and in general $M^{H}$ is an $H$-systematic $R^{H}$-module
by restriction of scalars.

\begin{lemma}
  \label{lem:systematic_equiv}
  Let $s \in N$ be a fixed element. The category $\cF_{H}$ of finitely
  generated $H$\nbd-systematically free $R^{H}$-modules is equivalent
  to the category $\cF_{N \rtimes H}[s \rtimes H]$. The equivalence
  takes the module $\shift h R^{H}$ to the module $\shift {s,h} R$.
\end{lemma}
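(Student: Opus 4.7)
The plan is to construct an explicit functor $\Phi \colon \cF_{H} \rTo \cF_{N\rtimes H}[s\rtimes H]$ sending $\shift h R^{H}$ to $\shift{(s,h)} R$, extending this additively to finite direct sums, and then showing that it is fully faithful on morphisms between generators (in fact the identity on the underlying abelian groups). Essential surjectivity is built into the definition, so the functor will then be an equivalence.

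The first step, which is the core computation, is to identify the Hom-groups between the distinguished generators. Because each generator is systematically free on one element (namely $1$ in the appropriate degree), a morphism is determined by the image of this generator, and the systematic condition pins down the degree of that image. Concretely, in $\cF_{H}$ a morphism $\shift h R^{H} \rTo \shift{h'} R^{H}$ is determined by an element of $R^{H}_{h'^{-1}h} = R_{(1,h'^{-1}h)}$, while in $\cF_{N\rtimes H}[s\rtimes H]$ a morphism $\shift{(s,h)} R \rTo \shift{(s,h')} R$ is determined by an element of $R_{(s,h')^{-1}(s,h)}$. The semi-direct product calculation
\begin{displaymath}
  (s,h')^{-1}(s,h) = (h'^{-1}s^{-1},\, h'^{-1})(s,h) = \bigl(\theta(h'^{-1})(s^{-1}s),\, h'^{-1}h\bigr) = (1,\, h'^{-1}h)
\end{displaymath}
shows that both Hom-groups are literally the same subgroup $R_{(1,h'^{-1}h)}$ of~$R$. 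Thus I can declare $\Phi$ to act as the identity on morphisms between generators.

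The second step is to extend $\Phi$ to arbitrary objects of $\cF_{H}$ by sending $\bigoplus_{h}(\shift h R^{H})^{m_{h}}$ to $\bigoplus_{h}(\shift{(s,h)} R)^{m_{h}}$, and to extend to morphisms by taking matrices to matrices with the same entries (each entry now reinterpreted as a morphism in the target category via step~1). Functoriality amounts to checking that matrix composition is preserved, which reduces to the claim that composition of morphisms between generators coincides on both sides: in both categories composition is just multiplication inside the ring~$R$ of the corresponding elements of $R_{(1,\cdot)}$, since in either setting a map $1 \mapsto m$ sends a general element $x$ to $mx$, and therefore composition sends $1 \mapsto m'm$. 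Preservation of identities is immediate.

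Finally, $\Phi$ is fully faithful because the isomorphism of Hom-groups between generators propagates to direct sums (both Hom-sets between direct sums are described as matrices with entries in the same groups $R_{(1,h'^{-1}h)}$), and it is essentially surjective because every generator of $\cF_{N\rtimes H}[s\rtimes H]$ has the form $\shift{(s,h)} R$ by definition and is hit by $\Phi(\shift h R^{H})$. The main potential obstacle is purely bookkeeping: making sure the semi-direct product inversion and the degree conventions for shifted modules line up, so that the two Hom-groups really are equal as subgroups of~$R$ rather than merely isomorphic. Once that identification is made, the rest of the verification is formal.
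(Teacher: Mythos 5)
Your argument is correct and follows essentially the same route as the paper's own proof: identify objects via the preferred generators, compute the Hom-groups between generators explicitly using the semi-direct product arithmetic (arriving at the same subgroup $R_{(1,h'^{-1}h)}$ of $R$ in both cases), note that composition is ring multiplication on both sides, and pass to direct sums via matrices.
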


\begin{proof}
  Finitely generated free based modules are entirely determined by
  their generators. Hence there is a bijection of objects determined
  by the assignment
  \begin{displaymath}
    \shift h R^{H} \mapsto \shift {s,h} R \ .
  \end{displaymath}
  Morphisms $\shift {s,h_{1}} R \rTo \shift {s,h_{2}} R$ in $\cF_{N
    \rtimes H}[s \rtimes H]$ are in bijective correspondence with
  elements of
  \begin{displaymath}
    \shift {s,h_{2}} R _{(s,h_{1})} = R_{(s,h_{2})\inv (s,h_{1})} =
    R_{(1, h_{2}\inv h_{1})} \ ,
  \end{displaymath}
  as morphisms of free modules on one generator are determined by the
  image of the generator. On the other hand, morphisms $\shift {h_{1}}
  R^{H} \rTo \shift {h_{2}} R^{H}$ in $\cF_{H}$ are in bijective
  correspondence with elements of
  \begin{displaymath}
    \shift {h_{2}} R^{H} _{h_{1}} = R^{H}_{h_{2}\inv h_{1}} = R_{(1,
      h_{2}\inv h_{1})} \ ,
  \end{displaymath}
  which is the same set. Composition of morphisms corresponds to
  multiplication of elements in both categories. This shows that the
  set of morphisms $\shift {s,h_{1}} R \rTo \shift {s,h_{2}} R$ and
  the set of morphisms $\shift {h_{1}} R^{H} \rTo \shift {h_{2}}
  R^{H}$ are the same. The claim follows by considering finite direct
  sums of modules (which amounts to considering matrices instead of
  single ring elements).
\end{proof}

\begin{theorem}
  \label{thm:generalised_Q}
  Let $G = N \rtimes H$ be a semi-direct product as before.  Suppose
  that $N$ is equipped with a translation-invariant partial order (so
  that $n \geq n'$ implies $anb \geq an'b$ for all $a,b,n,n' \in N$)
  which is also invariant under the action of~$H$ (so that $n \geq n'$
  implies $hn \geq hn'$ for all $n,n' \in N$ and $h \in H$). Let $R =
  \sum_{(n,h) \in N \rtimes H} R_{(n,h)}$ be an $(N \rtimes
  H)$-systematic ring with support in $N^{+} \rtimes H$, where $N^{+}
  = \{n \in N \,|\, n \geq 1\}$ is the positive cone of~$N$ as usual;
  that is, suppose that $R_{(n,h)} = \{0\}$ whenever $n \notin
  N^{+}$. There are canonical $\bZ[N \rtimes H]$-module isomorphisms
  \[\bZ[N \rtimes H] \tensor_{\bZ[H]} K^{\Sy H}_{n} (R^{H})
  \iso K^{\Sy {(N \rtimes H)}}_{n}(R) \ ,\]
  described by the formula $(s,h) \tensor \shift g R^{H} \mapsto \shift
    {s,hg} R$.
\end{theorem}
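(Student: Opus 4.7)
The plan is to specialise the five-step template of Section~\ref{sec:template} to $\cA = \cF_{N \rtimes H}$, taking the $H$-cosets $\{t\} \rtimes H \subseteq N \rtimes H$ as ``atomic blocks'' indexed by $t \in N$. For step~\ref{at:filter} I filter $\cP_{N \rtimes H}$ by the subcategories $\cP_{N \rtimes H}[F \rtimes H]$ with $F$ ranging over finite subsets of $N$; since $\bigcup_F (F \rtimes H) = N \rtimes H$, the decomposition~\eqref{eq:syst_filtered} applies. For step~\ref{at:identify}, Lemma~\ref{lem:systematic_equiv} identifies $\cF_{N \rtimes H}[\{t\} \rtimes H]$ with $\cF_H$ for each $t \in N$, and Lemma~\ref{lem:equivalent} upgrades this to an equivalence $\cP_{N \rtimes H}[\{t\} \rtimes H] \simeq \cP_H$. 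For step~\ref{at:lt}, enumerate $F = \{s_1, \ldots, s_r\}$ as a linear extension of the partial order so that $s_i \leq s_j$ implies $i \geq j$, and identify $\cF_{N \rtimes H}[F \rtimes H]$ with $\lt(\cF_{N \rtimes H}[\{s_q\} \rtimes H];\, 1 \leq q \leq r)$.

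The principal technical point is the triangularity check in step~\ref{at:lt}. Each matrix entry $f_{ij}$ of a morphism in $\cF_{N \rtimes H}[F \rtimes H]$ is built from elements of groups of the form $R_{(s_i, h)^{-1} (s_j, h')} = R_{(h^{-1}(s_i^{-1} s_j),\, h^{-1} h')}$, which vanishes by the positive support hypothesis unless $h^{-1}(s_i^{-1} s_j) \in N^+$. The $H$-invariance of the order reduces this condition to $s_i^{-1} s_j \geq 1$, and translation invariance then rewrites it as $s_j \geq s_i$; for $i < j$ the chosen enumeration excludes this, so $f_{ij} = 0$. This is precisely where all three hypotheses (positive support, translation invariance, and $H$-invariance of the order) combine, and it is the main obstacle to overcome. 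Steps~\ref{at:identify_K} and~\ref{at:final} then follow from Proposition~\ref{prop:generalisations} and passage to the directed colimit over $F$, yielding
\begin{displaymath}
  K_n^{\Sy {(N \rtimes H)}}(R) \iso \bigoplus_{s \in N} K_n^{\Sy H}(R^H) \iso \bZ[N \rtimes H] \tensor_{\bZ[H]} K_n^{\Sy H}(R^H) \ ,
\end{displaymath}
where the second isomorphism uses that $\bZ[N \rtimes H]$ is a free right $\bZ[H]$\nbd-module on $\{(s, 1) \,:\, s \in N\}$.

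It remains to verify $\bZ[N \rtimes H]$\nbd-equivariance and the explicit formula. Tracing through the identifications, $\shift g R^H$ corresponds via Lemma~\ref{lem:systematic_equiv} (applied with $t = 1$) to $\shift{1, g} R$, and the left shift by $(s, h)$ takes this to $\shift{s, hg} R$ since $(s, h)(1, g) = (s, hg)$ in $N \rtimes H$; this matches the prescription $(s, h) \tensor \shift g R^H \mapsto \shift{s, hg} R$. Full $\bZ[N \rtimes H]$-equivariance is then a formality: the shift by $(s, h)$ permutes the atomic blocks via $\{t\} \rtimes H \mapsto \{s \cdot h(t)\} \rtimes H$ and restricts compatibly to the identifications produced in step~\ref{at:identify}, and this compatibility is preserved by the filtered colimit in step~\ref{at:final}.
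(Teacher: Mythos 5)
Your proposal follows the paper's own proof essentially step for step: it instantiates the template of Section~\ref{sec:template} with the same filtration by cosets $F \rtimes H$, invokes Lemma~\ref{lem:systematic_equiv} and Lemma~\ref{lem:equivalent} for the base step, establishes lower triangularity by the same support computation in $R_{(s_i,h)^{-1}(s_j,h')}$ (your enumeration condition $s_i \leq s_j \Rightarrow i \geq j$ is equivalent to the paper's $s_i > s_j \Rightarrow i < j$ for distinct elements), and passes to the colimit via Proposition~\ref{prop:generalisations}. Your identification $\bigoplus_{s\in N} K_n^{\Sy H}(R^H) \iso \bZ[N\rtimes H]\tensor_{\bZ[H]} K_n^{\Sy H}(R^H)$ via the right $\bZ[H]$-basis $\{(s,1)\}$ is precisely the inverse pair $\alpha,\beta$ that the paper writes out explicitly, and your derivation of the formula and equivariance by tracking the base case $t=1$ and using the shift action is a sound shortcut for the paper's direct verification.
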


\begin{proof}
  {\bf Step~\ref{at:filter}.} The category $\cF_{G}$ is the filtered
  union of the categories $\cF_{G}[S \rtimes H]$, where $S$ ranges
  over the directed poset $\pow N$ of non-empty finite subsets of~$N$
  (ordered by inclusion). Consequently, $\cP_{G}$ is the filtered
  union of the categories~$\cP_{G}[S \rtimes H]$ by
  Lemma~\ref{lem:filtration}.

  {\bf Step~\ref{at:identify}.} It has been observed in
  Lemma~\ref{lem:systematic_equiv} that $\cF_{G}[s \rtimes H]$ is
  equivalent to the category~$\cF_{H}$. By Lemma~\ref{lem:equivalent},
  the categories $\cP_{G}[s \rtimes H]$ and $\cP_{H}$ are consequently
  equivalent.

  {\bf Steps~\ref{at:lt} and~\ref{at:identify_K}.} Let $S$ be a finite
  non-empty subset of~$N$. We write $S$ in the form $S = \{s_{1},\,
  s_{2},\, \cdots,\, s_{r}\}$ where $s_{i} > s_{j}$ implies $i<j$
  (that is, we choose a linear extension of the poset~$S$). We claim
  that the categories $\cF_{G}[S \rtimes H]$ and $\lt\big(
  \cF_{G}[s_{q} \rtimes H]; 1 \leq q \leq r\big)$ are
  equivalent. Every object~$P$ of~$\cF_{G}[S \rtimes H]$ is, by
  definition, a direct sum of the form $P = P_{1} \oplus P_{2} \oplus
  \ldots \oplus P_{r}$, where
  \begin{displaymath}
    P_{j} = \bigoplus_{k=1}^{n_{j}} \shift {s_{j}, h_{jk}} R
  \end{displaymath}
  with certain integers $n_{j} \geq 0$ and element $h_{jk} \in H$.
  That is, $P$ is a collection of objects $P_{j}$ of~$\cF_{G}[s_{j}
  \rtimes H]$. A morphism~$f$ from $P = P_{1} \oplus P_{2} \oplus
  \ldots \oplus P_{r}$ to $Q = Q_{1} \oplus Q_{2} \oplus \ldots \oplus
  Q_{r}$ is a matrix $(f_{ij})$ with $f_{ij} \colon P_{j} \rTo Q_{i}$
  a $G$-systematic map. To prove the desired equivalence of categories
  it is enough to verify that this matrix is necessarily a lower
  triangular matrix.

  We thus want to show that for $i < j$ we must have $f_{ij} =
  0$. Indeed, it is enough to show that every systematic homomorphism
  $\shift {s_{j}, h_{jk}} R \rTo \shift {s_{i}, h_{i\ell}} R$ is
  necessarily trivial. For the latter is determined by the image of
  the generator $1 \in \shift {s_{j}, h_{jk}} R _{(s_{j}, h_{jk})}$ in
  the set
  \begin{multline*}
    \shift {s_{i}, h_{i\ell}} R_{(s_{j}, h_{jk})} = R_{(s_{i}, h_{i\ell})\inv
      \cdot (s_{j}, h_{jk})} = R_{(h_{i\ell}\inv s_{i}\inv,
      h_{i\ell}\inv) \cdot (s_{j}, h_{jk})} \\ = R_{(h_{i\ell}\inv
      s_{i}\inv \cdot h_{i\ell}\inv s_{j}, h_{i\ell}\inv h_{jk})} =
    R_{(h_{i\ell}\inv (s_{i}\inv s_{j}), h_{i\ell}\inv h_{jk})} \ .
  \end{multline*}
  As $j > i$ we cannot have $s_{j} \geq s_{i}$, by choice of linear
  extension, and as the order is invariant under the action of~$H$ by
  hypothesis, $h_{i\ell}\inv s_{j} \not\geq h_{i\ell}\inv s_{i}$ and
  thus
  \begin{displaymath}
    h_{i\ell}\inv (s_{i}\inv s_{j}) =
    h_{i\ell}\inv s_{i} \inv \cdot h_{i\ell}\inv s_{j} =
    (h_{i\ell}\inv s_{i})\inv \cdot h_{i\ell}\inv s_{j} \not\geq 1 \ .
  \end{displaymath}
  This means that $h_{i\ell}\inv (s_{i}\inv s_{j}) \notin N^{+}$
  whence $\shift {s_{i}, h_{i\ell}} R _{(s_{j}, h_{jk})}$ must be the
  trivial group, by our hypothesis on the support of~$R$. In other
  words, the generator of the source must map to~$0$, and so the
  homomorphism under investigation is trivial.

  By Proposition~\ref{prop:generalisations} the inclusion functors
  $\cP_{G}[s_{j} \rtimes H] \rTo \cP_{G}[S \rtimes H]$ induce
  isomorphisms $\bigoplus_{s \in S} K_{n}(\cP_{G}[s \rtimes H])
  \rTo^{\iso} K_{n}(\cP_{G}[S \rtimes H])$.  From the previous step we
  conclude that we have isomorphisms
  \begin{equation}
    \label{eq:iso}
    \bZ[S] \tensor_{\bZ} K^{\Sy H}_{n}(R^{H}) \iso \bigoplus_{s \in S}
    K^{\Sy H}_{n}(R^{H}) \rTo[l>=3em]^{\iso} K_{n}(\cP_{G}[S
    \rtimes H])
  \end{equation}
  (where \(\bZ[S]\) denotes the free \textsc{abel}ian group on~\(S\))
  which are determined by the assignments $s \tensor \shift h R^{H}
  \mapsto \shift {s,h} R$. These isomorphisms are natural with
  respect to set inclusions $S \subseteq T$.

  {\bf Step~\ref{at:final}.} Upon passing
  to the colimit over $S \in \pow N$ in~\eqref{eq:iso} we obtain an
  isomorphism
  \begin{displaymath}
    \omega \colon \bZ[N] \tensor_{\bZ} K^{\Sy H}_{n}(R^{H}) 
    \rTo^{\iso} K^{\Sy {(N \rtimes H)}}_{n}(R) \ , \quad s \tensor
    \shift h R^{H} \mapsto \shift {s,h} R
  \end{displaymath}
  of \textsc{abel}ian groups.

  We have maps of left $\bZ[N]$-modules
  \begin{displaymath}
    \bZ[N] \tensor_{\bZ} K^{\Sy H}_{n}(R^{H}) \pile{\rTo^{\alpha} \\
    \lTo_{\beta}} \bZ[N \rtimes H] \tensor_{\bZ[H]} K^{\Sy H}_{n}(R^{H}) 
  \end{displaymath}
  described by the formul\ae{}
  \begin{displaymath}
    \alpha (s \tensor P) = (s,1) \tensor P \qquad \text{and} \qquad
    \beta \big( (s,h) \tensor P \big) = s \tensor \shift h P \ ;
  \end{displaymath}
  that is, $\alpha$ is induced by the inclusion of rings $\bZ[N] \rTo
  \bZ[N \rtimes H]$. To show that $\beta$ is well defined, it suffices
  to note that the assignment $\big( (s,h),\, P \big) \mapsto s
  \tensor \shift h P$ is $\bZ[H]$-balanced: Indeed, we have
  \begin{gather*}
    \big( (s,h) \cdot (1,h'),\, P \big) = \big( (s,hh'),\, P \big)
    \mapsto s \tensor \shift {hh'} P \\
    \noalign{\noindent and}%
    \big( (s,h),\, \shift {h'} P \big) \mapsto s \tensor \shift h
    {\shift {h'} P} = s \tensor \shift{hh'} P \ .
  \end{gather*}
  As $\shift 1 P = P$ we have $\beta \circ \alpha = \id$. But
  \begin{multline*}
    \alpha \circ \beta \big( (s,h) \tensor P \big) = \alpha (s \tensor \shift
    h P) = (s,1) \tensor \shift h P \\ = (s,1) \cdot (1,h) \tensor P =
    (s,h) \tensor P \qquad
  \end{multline*}
  so that $\beta$ is in fact an isomorphism.

  The composite map
  \begin{displaymath}
    \omega \circ \beta \colon \bZ[N \rtimes H] \tensor_{\bZ[H]} K^{\Sy
      H}_{n}(R^{H}) \rTo K^{\Sy {(N \rtimes H)}}_{n}(R)
  \end{displaymath}
  is described by the formula
  \begin{displaymath}
    \omega \circ \beta \colon (s,h) \tensor \shift g R^{H} \mapsto \shift
    {s,hg} R \ ,
  \end{displaymath}
  as can be seen readily by tracing the definitions. It is bijective
  by what we have proved before; it is in fact an isomorphism of left
  $\bZ[N \rtimes H]$-modules as, on the one hand,
  \begin{displaymath}
    \omega \circ \beta \big((s',h')\cdot (s,h) \tensor \shift g R^{H} \big)
    = \omega \circ \beta \big( (s' \cdot h's, h'h) \tensor \shift g R^{H}
    \big) = \shift {s' \cdot h's, h'hg} R
  \end{displaymath}
  while, on the other hand,
  \begin{displaymath}
  \shift {s',h'} {\omega \circ \beta \big((s,h) \tensor \shift g R^{H} \big)}
    = \shift {s',h'} {\shift {s,hg} R} = \shift {s' \cdot h's, h'hg} R
    \ . \tag*{\qedsymbol}
  \end{displaymath}
  \noqed%
\end{proof}

The theorem can be applied in rather more common situations. For
example, a $G$-graded ring $R = \bigoplus_{g \in G} R_{g}$ may be
considered as a $G$-systematic ring. The category $\cF_{G}$ is then
equivalent to the category of finitely generated $G$-graded free
$R$-modules so that Theorem~\ref{thm:generalised_Q} gives a
description of the graded $K$-theory of~$R$. This unifies various
calculations of graded $K$-theory in the literature, for example those
by \textsc{Quillen} \cite[p.~107, Proposition]{MR0338129} (the case $N
= \bZ$ and $H = \{1\}$), \textsc{Hazrat} and \textsc{H\"uttemann}
\cite[Theorem~1]{MR3095326} (the case of a direct product $\bZ \times
H$) and \textsc{H\"uttemann} \cite{MR3169434} (the case $N = \bZ^{n}$
and $H = \{1\}$, and $R$ having support in a polyhedral pointed cone).
--- It should be pointed out explicitly that the category of {\it
  all\/} systematic modules over a graded ring can contain non-graded
modules, but that a {\it systematically projective\/} module over a
graded ring is automatically graded.

\medbreak

The theorem also applies to a positively filtered ring~$R$, that is, a
ring equipped with an ascending chain of additive subgroups $F^{k}R$,
$k \in \bZ$, such that $F^{-1}R = \{0\}$, $1 \in F^{0}R$, $F^{k}R
\cdot F^{\ell} R \subseteq F^{k + \ell} R$, and $\bigcup_{k} F^{k}R =
R$. Setting $R_{k} = F^{k}R$ we obtain a $\bZ$-systematic ring; the
category $\cF_{\bZ}$ is now equivalent to the category of finitely
generated filt-free based $R$-modules in the sense of
\textsc{N{\u{a}}st{\u{a}}sescu} and \textsc{van Oystaeyen} \cite[\S
D~IV]{MR676974}. Application of Theorem~\ref{thm:generalised_Q} to the
trivial group extension $N = G = \bZ$ and $H = \{1\}$ then says that
the filtered $K$-theory of~$R$ is isomorphic to the tensor product of
$\bZ[x,x\inv] = \bZ[\bZ]$ with the (usual) $K$-theory of $F^{0}R$.

A positively filtered ring as above determines an associated
$\bZ$-graded ring with support in~$\bN$. Applying
Theorem~\ref{thm:generalised_Q} to both the filtered ring and the
associated graded ring, and noting that the ``degree 0''-pieces are
the same, we immediately get the following:

\begin{corollary}
  Let $R$ be a positively filtered ring, and let $B$ denote the
  associated $\bZ$-graded ring $\bigoplus_{k\geq0}
  F^{k}R/F^{k-1}R$. The filtered $K$-theory of~$R$ and the
  $\bZ$-graded $K$-theory of~$B$ are both isomorphic to $\bZ[x,x\inv]
  \tensor_{\bZ} K_{n} (F^{0}R)$ as left $\bZ[x,x\inv]$-modules; the
  action of the indeterminate~$x$ corresponds to shifting of filtered
  and graded modules, respectively.\qed
\end{corollary}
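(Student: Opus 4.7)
The plan is to apply Theorem~\ref{thm:generalised_Q} twice, once to $R$ and once to~$B$, with the split extension $1 \to \bZ \to \bZ \to 1 \to 1$, so $N = G = \bZ$ and $H = \{1\}$ as indicated in the paragraph immediately preceding the corollary. The translation-invariant partial order on $N = \bZ$ is the usual one, with positive cone $N^{+} = \bN$, and the $H$-action is trivial (so $H$-invariance is automatic).

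First I would handle the filtered ring. Setting $R_{k} = F^{k}R$ makes $R$ into a $\bZ$-systematic ring whose support lies in $\bN$ precisely because the filtration is positive (\ie, $F^{-1}R = 0$). Since $H = \{1\}$, the subring $R^{H}$ is simply $R_{0} = F^{0}R$, and $K^{\Sy H}_{n}(R^{H}) = K_{n}(F^{0}R)$ since an $H$-systematic module for trivial $H$ is just a module. Theorem~\ref{thm:generalised_Q} then produces a $\bZ[\bZ] = \bZ[x,x\inv]$-linear isomorphism
\begin{displaymath}
  \bZ[x,x\inv] \tensor_{\bZ} K_{n}(F^{0}R) \iso K^{\Sy \bZ}_{n}(R) \ ,
\end{displaymath}
where the indeterminate $x$ corresponds to the generator $1 \in \bZ$ and hence acts on the right-hand side by the shift functor on $\cP_{\bZ}$. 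As indicated in the paper, $\cF_{\bZ}$ is equivalent to the category of finitely generated filt-free based $R$-modules, so $K^{\Sy \bZ}_{n}(R)$ is by definition the filtered $K$-theory of~$R$.

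For the associated graded ring $B = \bigoplus_{k \geq 0} F^{k}R/F^{k-1}R$, the exact same argument applies: $B$ is naturally a $\bZ$-graded ring with support in~$\bN$, hence a $\bZ$-systematic ring with positive support, and its degree-zero piece is $B_{0} = F^{0}R/F^{-1}R = F^{0}R$, identical to that of the filtered ring. Applying Theorem~\ref{thm:generalised_Q} yields a $\bZ[x,x\inv]$-linear isomorphism
\begin{displaymath}
  \bZ[x,x\inv] \tensor_{\bZ} K_{n}(F^{0}R) \iso K^{\Sy \bZ}_{n}(B) \ ,
\end{displaymath}
and the remark in the text that systematically projective modules over a graded ring are automatically graded identifies $K^{\Sy \bZ}_{n}(B)$ with the $\bZ$-graded $K$-theory of~$B$. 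Composing the two isomorphisms (or rather, identifying both with the common object $\bZ[x,x\inv] \tensor_{\bZ} K_{n}(F^{0}R)$) gives the claim; the compatibility of the $x$-action with the shift of filtered (\resp, graded) modules is built into the formula $(s,h) \tensor \shift g R^{H} \mapsto \shift{s,hg} R$ of Theorem~\ref{thm:generalised_Q}.

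There is no real obstacle, as all the heavy lifting has been done in Theorem~\ref{thm:generalised_Q}; the only thing to check carefully is the bookkeeping, namely that both rings satisfy the hypotheses of the theorem with the same $R^{H} = F^{0}R$, and that the identifications of systematic $K$-theory with filtered (\resp, graded) $K$-theory are the standard ones recalled in the two paragraphs preceding the corollary.
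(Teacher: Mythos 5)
Your proposal is correct and follows essentially the same route as the paper: the paragraph preceding the corollary already indicates that one should apply Theorem~\ref{thm:generalised_Q} with $N = G = \bZ$ and $H = \{1\}$ to both $R$ (viewed as a $\bZ$-systematic ring with $R_k = F^k R$) and the associated graded ring $B$, and then observe that both have the same degree-zero piece $F^0 R$. Your careful bookkeeping of the identifications $R^H = F^0 R = B_0$, the support condition, and the $\bZ[x,x\inv]$-module structure is exactly what the paper leaves implicit.
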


\subsection*{Ordering the quotient}

Given a (possibly non-split) group extension of multiplicative group
\begin{displaymath}
  1 \rTo N \rTo^{\subseteq} G \rTo^{\pi} H \rTo 1
\end{displaymath}
and a translation-invariant partial order ``$\geq$'' on~$H$, we write
$H^{+}$ for the positive cone $\{h \in H \,|\, h \geq 1\}$ of~$H$ and
define $G^{+} = \pi\inv (H^{+})$. Let $R = \sum_{g \in G} R_{g}$ be a
$G$-systematic ring, and let $R_{N} = \sum_{n \in N} R_{n}$ be the
$N$-systematic subring determined by~$N$.

\begin{theorem}
  \label{thm:syst_K}
  Suppose the $G$-systematic ring~$R$ has support in~$G^{+}$ in the
  sense that $R_{g} = \{0\}$ whenever $g \notin G^{+}$. Any choice of
  set-theoretic section $\sigma \colon H \rTo G$ of~$\pi$ determines
  isomorphisms of \textsc{abel}ian groups
  \begin{equation}
    \label{eq:1}
    \Psi \colon \bigoplus_{H} K^{\Sy N}_{q} (R_{N}) \rTo^{\iso} K^{\Sy
      G}_{q} (R) \ ;
  \end{equation}
  the restriction of this isomorphism to the $h$-summand is induced by
  the functor determined by $\shift n R_{N} \mapsto \shift {\sigma(h)
    \cdot n} R$.
\end{theorem}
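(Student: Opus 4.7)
The plan is to apply the template from Section~\ref{sec:template}, with the filtration of~$\cF_G$ indexed by finite subsets of~$H$ via the section~$\sigma$. Observe first that $G = \bigsqcup_{h \in H} \sigma(h) \cdot N$ as sets, because $\pi \sigma = \id_H$ forces distinct $\sigma(h)$ to lie in distinct cosets of~$N$.

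For step~\ref{at:filter}, I would consider for each finite non-empty $T \subseteq H$ the full subcategory $\cF_G[\sigma(T) \cdot N]$, where $\sigma(T) \cdot N = \bigcup_{h \in T} \sigma(h) \cdot N \subseteq G$; by~\eqref{eq:syst_filtered} and Lemma~\ref{lem:filtration}, $\cP_G$ is the filtered union of the $\cP_G[\sigma(T) \cdot N]$. For step~\ref{at:identify}, I would prove an analogue of Lemma~\ref{lem:systematic_equiv}: the category $\cF_G[\sigma(h) \cdot N]$ is equivalent to~$\cF_N$ via the assignment $\shift{n}{R_N} \mapsto \shift{\sigma(h) n}{R}$. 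The morphism sets match because morphisms $\shift{\sigma(h) n_1}{R} \to \shift{\sigma(h) n_2}{R}$ are in bijection with $R_{(\sigma(h) n_2)^{-1} \sigma(h) n_1} = R_{n_2^{-1} n_1}$, which equals the morphism set $\shift{n_1}{R_N} \to \shift{n_2}{R_N}$ in~$\cF_N$ since $n_2^{-1} n_1 \in N$.

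For step~\ref{at:lt}, I fix a linear extension $h_1, \ldots, h_r$ of the poset~$T$ with $h_i > h_j$ implying $i < j$, and identify $\cF_G[\sigma(T) \cdot N]$ with $\lt\bigl(\cF_G[\sigma(h_q) \cdot N];\, 1 \leq q \leq r\bigr)$. The nontrivial content is lower triangularity: a systematic homomorphism $\shift{\sigma(h_j) n_j}{R} \to \shift{\sigma(h_i) n_i}{R}$ with $i < j$ must vanish. Such a morphism is determined by the image of the generator in $R_{\gamma}$ for $\gamma = (\sigma(h_i) n_i)^{-1} \sigma(h_j) n_j \in G$. Applying~$\pi$ yields $\pi(\gamma) = h_i^{-1} h_j$; since $i < j$ forces $h_j \not\geq h_i$ by choice of linear extension, translation-invariance of the order on~$H$ gives $h_i^{-1} h_j \not\geq 1$, so $\pi(\gamma) \notin H^+$ and $\gamma \notin G^+$. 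The support hypothesis then gives $R_\gamma = \{0\}$.

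Steps~\ref{at:identify_K} and~\ref{at:final} now run automatically: Proposition~\ref{prop:generalisations} combined with the equivalences from step~\ref{at:identify} yields isomorphisms $\bigoplus_{h \in T} K_q^{\Sy N}(R_N) \to K_q\bigl(\cP_G[\sigma(T) \cdot N]\bigr)$, natural with respect to inclusions $T \subseteq T'$, whose $h$-summand is induced by $\shift{n}{R_N} \mapsto \shift{\sigma(h) n}{R}$; passing to the colimit over finite subsets of~$H$ produces the desired isomorphism~$\Psi$. The main obstacle is the triangularity check in step~\ref{at:lt}: unlike in Theorem~\ref{thm:generalised_Q}, here $\sigma$ need not be a group homomorphism, so $\sigma(h_i)^{-1} \sigma(h_j)$ need not equal $\sigma(h_i^{-1} h_j)$. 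Fortunately, only the $\pi$-image of the relevant degree matters for checking membership in~$G^+$, and that image is unambiguously $h_i^{-1} h_j$ independently of the choice of~$\sigma$, which is precisely why a mere set-theoretic section suffices.
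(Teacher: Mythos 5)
Your proposal is correct and follows essentially the same route as the paper: filter $\cP_G$ by finite subsets of $H$ (via $\pi^{-1}(T)=\sigma(T)\cdot N$), identify each $\cF_G[\pi^{-1}(h)]$ with $\cF_N$ by $\shift{n}{R_N}\mapsto\shift{\sigma(h)n}{R}$, check lower triangularity by computing the degree of a generator image and using that only its $\pi$-image matters, then invoke Proposition~\ref{prop:generalisations} and pass to the colimit. Your explicit remark that only a set-theoretic section is needed because triangularity is detected in $H$ is exactly the point underlying the paper's argument, which computes directly with $R_{g^{-1}g'}$ and the condition $\pi(g)\not\leq\pi(g')$.
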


\begin{proof}
  The proof is similar to that of Theorem~\ref{thm:generalised_Q}; we
  give a short account of the relevant arguments. --- Let $g,g' \in G$
  be such that $g\inv g' \notin G^{+}$, \ie, such that $\pi(g)
  \not\leq \pi(g')$. Then there are no non-trivial homomorphisms $\eta
  \colon \shift {g'} R \rTo \shift g R$. Indeed, $\eta$ is determined
  by the image of the generator $1 \in \shift {g'} R _{g'}$, that is,
  by the element $\eta(1) \in \shift g R _{g'} = R_{g\inv g'}$; but
  $R_{g\inv g'} = \{0\}$ by our condition on the support of~$R$.

  Now let $S = \{s_{1},\, s_{2},\, \cdots,\, s_{r}\} \subseteq H$ be a
  finite subset; we may assume, by renumbering if necessary, that
  $s_{i} > s_{j}$ implies $i < j$. It follows from the previous
  paragraph that we can identify $\cF_{G}[\pi\inv S]$ with the lower
  triangular category $\lt\big(\cF_{G}[\pi\inv(s_{q})]; 1 \leq q \leq
  r\big)$; from Proposition~\ref{prop:generalisations} we infer that
  the inclusion functors induce isomorphisms
  \begin{equation}
    \label{eq:iso_for_S}
    \bigoplus_{s \in S} K_{q} \big( \cP_{G}[\pi\inv(s)] \big) \rTo
    K_{q} \big( \cP_{G}[\pi\inv S] \big)
  \end{equation}
  which are natural in~$S$.

  Now observe that the category $\cF_{N}$ of finitely generated
  $N$-graded free $R_{N}$-modules is equivalent to
  $\cF_{G}[\pi\inv(s)]$, via the functor that takes $\shift n R_{N}
  \in \cF_{N}$ to the $G$-graded free $R$-module $\shift {\sigma(s)n}
  R$. This is a bijection on objects as $\pi\inv(s) =
  \sigma(s)N$. Morphisms $\shift {n_{1}} R_{N} \rTo \shift {n_{2}}
  R_{N}$ are in bijective correspondence with elements of $(\shift
  {n_{2}} R_{N})_{n_{1}} = (R_{N})_{n_{2}\inv n_{1}}$, and morphisms
  $\shift {\sigma(s)n_{1}} R \rTo \shift {\sigma(s)n_{2}} R$ are in
  bijective correspondence with elements of $\shift {\sigma(s)n_{2}} R
  _{\sigma(s)n_{1}} = R_{n_{2}\inv n_{1}}$, which is the same set.
  From Lemma~\ref{lem:equivalent} we conclude that $\cP_{G}[\pi\inv
  (s)]$ is equivalent to the category $\cP_{N}$ of finitely generated
  $N$-graded projective $R_{N}$-modules.

  Combining this with the isomorphism~\eqref{eq:iso_for_S}, and
  passing to the limit with respect to~$S$, we obtain an isomorphism
  of $K$-groups as stated in the Theorem.
\end{proof}

\begin{remark}
  Define a group $N \tilde \times_{\sigma} H$ which has $N \times H$
  as underlying set, has neutral element $\big(\sigma(1)^{-1},1
  \big)$, and has multiplication law
  \begin{displaymath}
    (n_{1}, h_{1}) \cdot (n_{2}, h_{2}) = \big(
    \sigma(h_{1}h_{2})^{-1} \sigma(h_{1}) \sigma(h_{2})
    n_{1}^{\sigma(h_{2})}n_{2}, h_{1}h_{2} \big) \ 
  \end{displaymath}
  where $a^{b} = b^{-1}ab$. The map $\mu(n,h) = \sigma(h)n$ is a group
  isomorphism $\mu \colon N \tilde \times H \rTo G$ with inverse
  $\alpha(g) = \big( \sigma(\pi(g))^{-1}g ,\pi(g)\big)$.  The
  left-hand side of~\eqref{eq:1} has an $N \tilde \times_{\sigma}
  H$-module structure described by saying that the action of
  $(n_{1},h_{1})$ sends the module $\shift {n_{2}} R_{N}$ in the
  $h_{2}$-summand to the module $\shift {\bar n} R_{N}$ in the
  $h_{1}h_{2}$-summand, where $\bar n = \sigma(h_{1}h_{2})^{-1}
  \sigma(h_{1}) \sigma(h_{2}) n_{1}^{\sigma(h_{2})}n_{2}$. The
  isomorphism then becomes a $\mu$-semilinear map in the sense that
  $\Psi \big( (n,h) \cdot x) = \mu(n,h) \cdot \Psi(x)$.
\end{remark}

Theorem~\ref{thm:syst_K} and
Proposition~\ref{prop:strong_systematic_equivalence} (applied to $Q =
N$ and $K = R_{N}$) yield:

\begin{corollary}
  \label{cor:Au_Walker}
  Suppose the $G$-systematic ring~$R$ has support in~$G^{+}$. Suppose
  further that the $N$-systematic ring~$R_{N}$ is strongly
  systematic. Then there are isomorphisms of $K$-groups
  \begin{displaymath}
    \bigoplus_{H} K_{n} (R_{1}) \iso K^{G\mathrm{\text{-}gr}}_{n} (R)
    \ .\tag*{\qedsymbol}
  \end{displaymath}
\end{corollary}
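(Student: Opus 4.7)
The plan is to chain together the two results cited in the statement. First, I apply Theorem~\ref{thm:syst_K} to the extension $1 \rTo N \rTo G \rTo H \rTo 1$ and to the ring $R$ with support in $G^{+}$. This immediately yields an isomorphism
\[
\bigoplus_{H} K^{\Sy N}_{n}(R_{N}) \iso K^{\Sy G}_{n}(R).
\]

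Next, I invoke Proposition~\ref{prop:strong_systematic_equivalence} with $Q = N$ and $K = R_{N}$; by hypothesis $R_{N}$ is strongly systematic, so the proposition produces an equivalence of additive categories between $\cP_{N}$ and the category of finitely generated projective $(R_{N})_{1}$-modules. Since $R_{N} = \sum_{n \in N} R_{n}$, its degree-$1$ component is literally $R_{1}$, so passing to $K$-theory (and using that the relevant categories are both equipped with the split exact structure) one obtains $K^{\Sy N}_{n}(R_{N}) \iso K_{n}(R_{1})$.

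Combining the two displayed isomorphisms gives the desired isomorphism $\bigoplus_{H} K_{n}(R_{1}) \iso K^{\Sy G}_{n}(R)$. Since both inputs are already established, there is no real obstacle here; the only small thing to observe is the trivial identification $(R_{N})_{1} = R_{1}$, and that Proposition~\ref{prop:strong_systematic_equivalence} applies to $R_{N}$ (which is strongly $N$-systematic as a ring in its own right, not merely as the $N$-part of a larger systematic structure).
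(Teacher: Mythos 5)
Your argument is exactly the paper's: the corollary is obtained by chaining Theorem~\ref{thm:syst_K} (giving $\bigoplus_{H} K^{\Sy N}_{n}(R_{N}) \iso K^{\Sy G}_{n}(R)$) with Proposition~\ref{prop:strong_systematic_equivalence} applied to $Q=N$, $K=R_N$ (giving $K^{\Sy N}_{n}(R_{N}) \iso K_{n}(R_{1})$), and $(R_N)_1 = R_1$ is the trivial identification. Correct, and essentially identical to the paper's one-line deduction.
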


\section{Equivariant $K$-theory of affine toric schemes}

Corollary~\ref{cor:Au_Walker} generalises a result on graded
$K$-theory obtained by \textsc{Au}, \textsc{Huang} and \textsc{Walker}
\cite[Theorem~1]{MR2494374}. In their set-up, $G$ is an
\textsc{abel}ian group, $A \subseteq G$ a sub-monoid, $R = B[A]$ the
monoid ring over a commutative ground ring~$B$, and $N$ is the group
of invertible elements in~$A$. The partial order on $H = G/N$ is
defined by
\begin{displaymath}
  h' \geq h \quad \Leftrightarrow \quad g'g\inv \in A \ ,
\end{displaymath}
where $g,g' \in G$ are representatives of the cosets $h = gN$ and
$h'=g'N$. (That is, $A = G^{+}$ in the notation of
Theorem~\ref{thm:syst_K}.) Corollary~\ref{cor:Au_Walker} then reduces
to the cited result
\begin{displaymath}
  \bZ[G/N] \tensor_{\bZ} K_{n} (B)  \iso \bigoplus_{G/N} K_{n} (B) \iso
  K^{G\mathrm{\text{-}gr}}_{n} \big( B[A] \big) \ .
\end{displaymath}
As explained in {\it loc.cit.}, the result can be further specialised
and translated into the language of affine toric schemes. We will look
at a slight generalisation. So suppose in addition to the above that
$G \iso \bZ^{r}$ is an $r$-dimensional lattice, and that $A =
\sigma^{\vee} \cap G$ for some rational polyhedral $r$-dimensional
cone $\sigma^{\vee} \subseteq G \tensor_{\bZ} \bR \iso \bR^{r}$. Then
$R$ is the coordinate ring of the (possibly singular) affine toric
$B$-scheme $X = \mathrm{Spec} R$.

Suppose further that $R'$ is a commutative $G$-graded $R$-algebra,
that is, a commutative $G$-graded ring equipped with a
degree-preserving ring homomorphism $\nu \colon R \rTo
R'$. Geometrically this corresponds to a morphism of affine schemes
\begin{displaymath}
X' = \mathrm{Spec} R' \rTo \mathrm{Spec} R = X\ ;
\end{displaymath}
due to the presence of $G$-gradings, the $r$-dimensional algebraic
torus $T = \mathrm{Spec} B[G]$ acts on both source and target, and the
morphism is $T$-equivariant.

Note now that since $R_{N}$ is strongly graded so is $R'_{N}$; indeed,
for any $n,n' \in G$ we have $R'_{n} \tensor R'_{n'} \supseteq R'_{n}
\tensor \nu (R_{n'})$, and the restriction
\begin{displaymath}
   R'_{n} \tensor \nu (R_{n'}) \rTo  R'_{nn'}
\end{displaymath}
of the multiplication map in~$R'$ is surjective since any module over
a strongly graded ring is itself strongly graded
(Lemma~\ref{lem:systematic_Dade} or \cite[Theorem~2.8]{MR593823},
applied to the right $R_{N}$-module~$R'_{N}$). As $T$-equivariant
vector bundles on~$\mathrm{Spec} R'$ correspond to finitely generated
$G$-graded projective $R'$-modules, Corollary~\ref{cor:Au_Walker}
yields the following generalisation of \cite[Theorem~4]{MR2494374}:

\begin{theorem}
  If in the situation set out above the $G$-graded $R$-algebra $R'$
  has support in~$G^{+}$, there are isomorphisms of \textsc{abel}ian
  groups
  \begin{displaymath}
    K^{T}_{n} (X') \iso \bigoplus_{G/N} K_{n} (R'_{1}) \iso \bZ[G/N]
    \tensor_{\bZ} K_{n} (R'_{1}) \ . \tag*{\qedsymbol}
  \end{displaymath}
\end{theorem}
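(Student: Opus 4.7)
The plan is to reduce the statement to Corollary~\ref{cor:Au_Walker} applied to the $G$-systematic ring $R'$ in place of~$R$, after identifying the equivariant $K$-theory with graded $K$-theory.

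First, I would record the translation between geometry and algebra. Since $X' = \mathrm{Spec}\, R'$ with $R'$ a commutative $G$-graded ring, and since $T = \mathrm{Spec}\, B[G]$ acts on $X'$ via the $G$-grading, the standard equivalence between $T$-equivariant quasi-coherent sheaves on~$X'$ and $G$-graded $R'$-modules restricts to an equivalence between $T$-equivariant vector bundles on $X'$ and finitely generated $G$-graded projective $R'$-modules. (This is the identification already invoked in the paragraph preceding the theorem.) Consequently
\[
K^{T}_{n}(X') \iso K^{G\text{-gr}}_{n}(R').
\]

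Second, I would verify the two hypotheses of Corollary~\ref{cor:Au_Walker} for the ring~$R'$. The support hypothesis $R' _{g} = \{0\}$ for $g \notin G^{+}$ is given. For the strong systematicity of $R'_{N}$, I would invoke the argument already written out in the paragraph before the theorem: the ring $R_{N} = B[N]$ is a group ring, hence strongly $N$-graded, so by Lemma~\ref{lem:systematic_Dade} (applied to $R'_{N}$ viewed as a right $R_{N}$-module via $\nu$) the inclusion $R'_{n}\cdot \nu(R_{n'}) \subseteq R'_{n}R'_{n'}$ combined with surjectivity of $R'_{n}\tensor \nu(R_{n'}) \rTo R'_{nn'}$ gives $R'_{n}R'_{n'} = R'_{nn'}$ for all $n,n' \in N$.

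With both hypotheses in hand, Corollary~\ref{cor:Au_Walker} immediately yields
\[
K^{G\text{-gr}}_{n}(R') \iso \bigoplus_{G/N} K_{n}(R'_{1}) \iso \bZ[G/N] \tensor_{\bZ} K_{n}(R'_{1}),
\]
and combining with the identification from the first step gives the stated result. There is no real obstacle here: the work has all been done in the preceding lemmas and corollary, and the main content of this theorem is the translation to the $T$-equivariant language together with the observation that the hypothesis ``$R_{N}$ strongly graded'' propagates to $R'_{N}$ along the algebra map~$\nu$. The only point requiring a little care is to note that $R'$ inherits the support hypothesis from its role as a $G$-graded $R$-algebra (which is part of the standing assumption), and that $N$ is still the group of units of $A$ so that $R_{N} = B[N]$ is genuinely a group ring.
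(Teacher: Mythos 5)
Your proposal is correct and follows essentially the same route as the paper: the paper's proof is the paragraph immediately preceding the theorem statement, which (a) propagates strong gradedness from $R_{N}=B[N]$ to $R'_{N}$ via Lemma~\ref{lem:systematic_Dade} applied to $R'_{N}$ as a right $R_{N}$-module, (b) identifies $T$-equivariant bundles on $X'$ with finitely generated $G$-graded projective $R'$-modules, and (c) invokes Corollary~\ref{cor:Au_Walker}. You have reproduced exactly these steps.
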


\end{document}